\documentclass[11pt]{amsart}
\usepackage[colorlinks=true, pdfstartview=FitV, linkcolor=blue, citecolor=blue, urlcolor=blue]{hyperref}
\usepackage{amssymb,amsmath, amscd, array}
\usepackage{times, verbatim}
\usepackage{graphicx}
\usepackage[english]{babel}
 \usepackage[usenames, dvipsnames]{color}
\usepackage{amsmath,amssymb,amsfonts}
\usepackage{enumerate}
\usepackage{anysize}
\marginsize{3cm}{3cm}{3cm}{3cm}
\input xy
\xyoption{all}
\usepackage{pb-diagram}
\usepackage[all]{xy}
\input xy
\xyoption{all}

\DeclareFontFamily{OT1}{rsfs}{}
\DeclareFontShape{OT1}{rsfs}{n}{it}{<-> rsfs10}{}
\DeclareMathAlphabet{\mathscr}{OT1}{rsfs}{n}{it}

\begin{document}
\theoremstyle{plain}

\newtheorem{theorem}{Theorem}[section]
\newtheorem{thm}[equation]{Theorem}
\newtheorem{prop}[equation]{Proposition}
\newtheorem{corollary}[equation]{Corollary}
\newtheorem{conj}[equation]{Conjecture}
\newtheorem{lemma}[equation]{Lemma}
\newtheorem{definition}[equation]{Definition}
\newtheorem{question}[equation]{Question}

\theoremstyle{definition}
\newtheorem{conjecture}[theorem]{Conjecture}

\newtheorem{example}[equation]{Example}
\numberwithin{equation}{section}

\newtheorem{remark}[equation]{Remark}

\newcommand{\Hecke}{\mathcal{H}}
\newcommand{\Liea}{\mathfrak{a}}
\newcommand{\Cmg}{C_{\mathrm{mg}}}
\newcommand{\Cinftyumg}{C^{\infty}_{\mathrm{umg}}}
\newcommand{\Cfd}{C_{\mathrm{fd}}}
\newcommand{\Cinftyfd}{C^{\infty}_{\mathrm{ufd}}}
\newcommand{\sspace}{\Gamma \backslash G}
\newcommand{\PP}{\mathcal{P}}
\newcommand{\bfP}{\mathbf{P}}
\newcommand{\bfQ}{\mathbf{Q}}
\newcommand{\Siegel}{\mathfrak{S}}
\newcommand{\g}{\mathfrak{g}}
\newcommand{\A}{\mathbb{A}}
\newcommand{\la}{\mathbb{\langle}}
\newcommand{\ra}{\mathbb{\rangle}}
\def\G{{\rm G}}
\def\B{{\rm B}}
\def\T{{\rm T}}
\def\SL{{\rm SL}}
\def\PSL{{\rm PSL}}
\def\GSp{{\rm GSp}}

\newcommand{\wT}{\widehat{\T}}
\newcommand{\wG}{\widehat{\G}}
\newcommand{\wB}{\widehat{\B}}
\newcommand{\wrho}{\widehat{\rho}}
\newcommand{\Q}{\mathbb{Q}}
\newcommand{\Gm}{\mathbb{G}_m}
\newcommand{\Nm}{\mathbb{N}m}
\newcommand{\ii}{\mathfrak{i}}
\newcommand{\II}{\mathfrak{I}}

\newcommand{\kk}{\mathfrak{k}}
\newcommand{\nn}{\mathfrak{n}}
\newcommand{\tF}{\tilde{F}}
\newcommand{\p}{\mathfrak{p}}
\newcommand{\m}{\mathfrak{m}}
\newcommand{\bb}{\mathfrak{b}}
\newcommand{\Ad}{{\rm Ad}\,}
\newcommand{\ttt}{\mathfrak{t}}
\newcommand{\frakt}{\mathfrak{t}}
\newcommand{\U}{\mathcal{U}}
\newcommand{\Z}{\mathbb{Z}}
\newcommand{\bfG}{\mathbf{G}}
\newcommand{\bfT}{\mathbf{T}}
\newcommand{\R}{\mathbb{R}}
\newcommand{\ST}{\mathbb{S}}
\newcommand{\h}{\mathfrak{h}}
\newcommand{\bC}{\mathbb{C}}
\newcommand{\C}{\mathbb{C}}
\newcommand{\F}{\mathbb{F}}
\newcommand{\N}{\mathbb{N}}
\newcommand{\qH}{\mathbb {H}}
\newcommand{\temp}{{\rm temp}}
\newcommand{\Hom}{{\rm Hom}}
\newcommand{\Aut}{{\rm Aut}}
\newcommand{\rk}{{\rm rk}}
\newcommand{\Ext}{{\rm Ext}}
\newcommand{\End}{{\rm End}\,}
\newcommand{\Ind}{{\rm Ind}}
\newcommand{\ind}{{\rm ind}}
\newcommand{\Irr}{{\rm Irr}}
\def\circG{{\,^\circ G}}
\def\M{{\rm M}}
\def\diag{{\rm diag}}
\def\Ad{{\rm Ad}}
\def\As{{\rm As}}
\def\wG{{\widehat \G}}

\def\PGSp{{\rm PGSp}}
\def\Sp{{\rm Sp}}
\def\St{{\rm St}}
\def\GU{{\rm GU}}
\def\SU{{\rm SU}}
\def\U{{\rm U}}
\def\GO{{\rm GO}}
\def\GL{{\rm GL}}
\def\PGL{{\rm PGL}}
\def\GSO{{\rm GSO}}
\def\GSpin{{\rm GSpin}}
\def\GSp{{\rm GSp}}

\def\Gal{{\rm Gal}}
\def\SO{{\rm SO}}
\def\O{{\rm  O}}
\def\Sym{{\rm Sym}}
\def\sym{{\rm sym}}
\def\St{{\rm St}}
\def\Sp{{\rm Sp}}
\def\tr{{\rm tr\,}}
\def\ad{{\rm ad\, }}
\def\Ad{{\rm Ad\, }}
\def\rank{{\rm rank\,}}

\subjclass{Primary 20G05; Secondary 17B10, 22E46}

\title{Uniqueness of Branching through regular unipotent elements}

\author{Santosh Nadimpalli, Santosha Pattanayak}

\keywords{Branching laws, principal $\SL(2)$, regular unipotent element}
\begin{abstract} Let \(\mathrm G\) be a complex simple algebraic group and let \(\mathrm G_0\subset \mathrm G\) be a closed connected subgroup containing a regular unipotent element of \(\mathrm G\), with semisimple rank at least \(2\). Using Dynkin's classification, we prove that the restriction of an irreducible finite-dimensional representation of \(\mathrm G\) to \(\mathrm G_0\) determines the representation up to an outer automorphism of \(\mathrm G\) preserving \(\mathrm G_0\).

We extend this method to the diagonal embedding
$\mathrm G_0\hookrightarrow \mathrm G\times \mathrm G$ for the specific pairs
$(\mathrm{SO}_{2k}(\mathbb C) \times\mathrm{SO}_{2k}(\mathbb C),\,\mathrm{SO}_{2k-1}(\mathbb C))$,
$(E_6\times E_6,\,F_4)$ and $(Spin_8(\mathbb C) \times Spin_8(\mathbb C), G_2)$ and show that uniqueness continues to hold. Finally, we give examples showing that, in the diagonal setting, restriction to the principal
\(\mathrm{SL}_2(\mathbb C)\) alone is not sufficient to establish uniqueness. \end{abstract}
\maketitle

  \section{Introduction}

  Let $\mathrm G$ be a connected simple complex algebraic group, and let 
${\mathrm G}_0 \subset \mathrm G$ be a closed connected subgroup containing a regular 
unipotent element of $\mathrm G$. In this paper, we study the problem of recovering an 
irreducible representation of $\mathrm G$ from its restriction to such a subgroup 
${\mathrm G}_0$.

The study of how representations of a group behave when restricted to its subgroups known as the branching problem, is a central theme in representation theory. 
Given an irreducible representation $V(\lambda)$ of ${\mathrm G}$ with highest weight 
$\lambda$, one seeks to understand the decomposition of its restriction 
$res_{\mathrm G_0}V(\lambda)$ into irreducible ${\mathrm G}_0$-modules. In general, the decomposition is highly nontrivial and depends in a subtle way on the chosen embedding ${\mathrm G}_0 \hookrightarrow {\mathrm G}$. The multiplicities occurring in this decomposition capture subtle structural and geometric features of both groups.

In this work, we approach the branching problem from a different perspective: instead of 
analyzing the full decomposition of the restricted representation, we ask how much of 
the original representation $V(\lambda)$ can be recovered solely from its restriction to 
${\mathrm G}_0$. Remarkably, when the subgroup ${\mathrm G}_0$ contains a regular unipotent element and has semisimple rank greater than one, the restriction of an irreducible representation determines the original representation uniquely up to outer automorphisms of $\mathrm G$.

Our main result gives an affirmative answer for all pairs in Dynkin's classification of simple subgroups containing a regular unipotent element, under the assumption that \(\operatorname{rank}\mathrm G_0\ge 2\). Thus the restriction of an irreducible representation of \(\mathrm G\) to \(\mathrm G_0\) determines the original representation up to an outer automorphism of \(\mathrm G\) preserving \(\mathrm G_0\). For example, two irreducible representations of $\SL_{2n}(\C)$ (resp., $\SL_{2n+1}(\C)$) which have the same restriction to  $\Sp_{2n}(\C)$ (resp., $\SO_{2n+1}(\C)$), are either isomorphic or are dual to each other. 
This was proved in our previous work
\cite{unique_branch} for 
some symmetric pairs $(\G, \G_0)$ using results of C. S. Rajan on the unique factorisation of characters
in the paper \cite{rajan_1}. 
The present proof is uniform and only uses the product 
formula established in \cite{character}. The argument covers all pairs $(\G,\G_0)$ in the Dynkin classification (see the list in Section~\ref{unique-single}) with $\operatorname{rank}\G_0\ge 2$,
including several that were not treated in \cite{unique_branch}, most
notably $(\mathrm{SL}_7(\mathbb C),\mathrm{G}_2)$ , $(\mathrm{SO}_7(\mathbb C),\mathrm{G}_2)$ and $(\mathrm{Spin}_7,\mathrm{G}_2)$.

The hypothesis on the subgroup ${\mathrm G}_0$ is crucial: the presence of a regular unipotent element ensures that ${\mathrm G}_0$ intersects every regular conjugacy class of $\mathrm G$ in a way that preserves the essential data of the representation. This setting includes, in particular, subgroups containing a 
\emph{principal} $\mathrm{SL}_2(\mathbb{C})$, which play a central role in the geometric 
and representation-theoretic structure of $\mathrm G$.

The proof relies on a detailed analysis of the restriction of characters to the image of 
the principal homomorphism
\[
\psi : \mathrm{SL}_2(\mathbb{C}) \longrightarrow \mathrm G.
\]
By evaluating the Weyl character formula on this one-parameter subgroup, we obtain the product formula for the character values in \cite{character} 
which captures the essential information of the representation in a single complex 
variable $z$. This specialization turns out to be sufficient to reconstruct the highest 
weight $\lambda$, and hence the representation $V(\lambda)$, up to outer automorphism.

One consequence of our result is that, for the Dynkin pairs considered here, branching is faithful on irreducible representations up to the natural outer symmetries of the ambient group. Thus, although the decomposition of \(\operatorname{res}_{\mathrm G_0}V(\lambda)\) into irreducible \(\mathrm G_0\)-modules may be complicated, the resulting \(\mathrm G_0\)-module still remembers the original \(\mathrm G\)-module. 

Extending the method to the diagonal embedding
$\mathrm G_0\hookrightarrow \mathrm G\times \mathrm G$, we establish uniqueness for the specific
pairs
$(\mathrm{SO}_{2k}(\mathbb C) \times\mathrm{SO}_{2k}(\mathbb C),\,\mathrm{SO}_{2k-1}(\mathbb C))$,$(E_6\times E_6,\,F_4)$ and $(Spin_8(\mathbb C) \times Spin_8(\mathbb C), G_2)$ (Section~\ref{two-copies}). We also give explicit examples showing that, in the diagonal setting, the principal \(\mathrm{SL}_2(\mathbb C)\)-data alone cannot replace the full restricted character to determine the original representations uniquely.

Questions in which Dynkin diagram automorphisms explain coincidences in
representation-theoretic constructions also arise in the work of Guilhot
and Lecouvey on induced modules \cite{guilhot_lecouvey}. The principal $\mathrm{SL}_2(\mathbb C)$ is another classical tool, originating in Kostant's
principal three-dimensional subgroup \cite{kostant_principal}; see also
Gross's work on minuscule representations and the principal $\mathrm{SL}_2(\mathbb C)$
\cite{gross_miniscule}. Related uses of special elements in character theory also appear in Prasad's work
\cite{dipendra_half_sum}. In the present paper, we use
principal $\mathrm{SL}_2(\mathbb C)$-specializations of characters to prove
uniqueness results for branching to subgroups containing regular
unipotent elements.

The paper is organized as follows. Section~\ref{prelim} recalls the product formula and the necessary background on root systems and principal \(\mathrm{SL}_2(\mathbb C)\)-subgroups. In Section~\ref{unique-single} we prove the main uniqueness theorem for single restrictions. Finally, Section~\ref{two-copies} treats diagonal embeddings, proves the diagonal uniqueness results listed above, and records examples illustrating the limitations of principal \(\mathrm{SL}_2(\mathbb C)\)-data in the diagonal problem.

\section{Preliminaries}\label{prelim} Let ${\rm G}$ be a connected complex reductive algebraic group with Lie algebra 
$\mathfrak{g}$. Fix a maximal torus $T \subset \mathrm G$ and a Borel subgroup 
$B$ containing it, i.e.\ $T \subset B$.

Let $\Phi$ denote the set of roots of $\mathrm G$ with respect to $T$, and let 
$\Phi^+ \subset \Phi$ be the subset of positive roots determined by the choice of 
$B$. Denote by $\Delta \subset \Phi^+$ the corresponding set of simple roots. Let $\mathfrak h=\operatorname{Lie}(T)$ and let $W$ be the Weyl group of $\mathrm G$.

Let $X^*(T)$ and $X_*(T)$ be, respectively, the character and 
cocharacter lattices of $T$, that is,
\[
X^*(T) = \Hom(T, \mathbb{C}^\times), 
\qquad 
X_*(T) = \Hom(\mathbb{C}^\times, T).
\]
These lattices are naturally in duality via the pairing 
$\langle \lambda, \mu \rangle = \deg(\lambda \circ \mu)$. We define \[ \rho = \frac{1}{2}\sum_{\alpha\in\Phi^+}\alpha \;\in\; X^*(T)\otimes_{\mathbb Z}\mathbb Q. \]
The set of coroots $\alpha^\vee$ of $\mathrm G$ lies in $X_*(T)$, and we define
the half-sum of positive coroots by
\[
\rho^\vee = \frac{1}{2} \sum_{\alpha \in \Phi^+} \alpha^\vee 
\in X_*(T) \otimes_\mathbb{Z} \mathbb{Q}.
\]
If \(\mathrm G_0\subset \mathrm G\) is a reductive subgroup and \(T\) is chosen so that $T_0:=T\cap \mathrm G_0$ is a maximal torus of \(\mathrm G_0\), then restriction of characters gives a map
\[
p \colon X^*(T) \longrightarrow X^*(T_0)
\]
which on the level of Lie algebras induces $p \colon \mathfrak{h}^* \to \mathfrak{h}_0^*$. It induces a homomorphism of group algebras
\[
p:\mathbb Z[P]\longrightarrow \mathbb Z[P_0] \,\, \,\,
\text{defined by} \,\, \,\, 
p(e^\lambda)=e^{p(\lambda)},
\] where $P$ and $P_0$ are the weight lattices of $\mathrm G$ and $\mathrm G_0$, respectively.
Under this map, the roots of $\mathrm G$ restrict to weights of $T_0$ that lie in
the root lattice of $\mathrm G_0$.  In the specific pairs we consider, each simple
root of $\mathrm G$ restricts to a simple root of $\mathrm G_0$.

For a dominant weight $\lambda \in X^*(T)$, we denote by $V(\lambda)$ the irreducible representation of $\mathrm G$ with highest weight $\lambda$ and by $\Theta_{\lambda}$ its character. Recall the Weyl character formula: 
\[
\Theta_{\lambda}
=
\frac{\displaystyle\sum_{w\in W}(-1)^{\ell(w)}e^{w(\lambda+\rho)}}
{\displaystyle\sum_{w\in W}(-1)^{\ell(w)}e^{w(\rho)}}.
\] We define the normalized Weyl numerator to be 
\[
U_\lambda
=
e^{-(\lambda+\rho)}
\sum_{w\in W}
(-1)^{\ell(w)}
e^{w(\lambda+\rho)}.
\]
Since $\lambda+\rho$ is dominant, we have 
\[
(\lambda+\rho)-w(\lambda+\rho)
=
\sum_{\alpha\in\Delta}
c_{w,\alpha}(\lambda)\alpha,
\qquad
c_{w,\alpha}(\lambda)\in \mathbb Z_{\ge 0}.
\]
So we may write
\[
U_\lambda
=
\sum_{w\in W}
(-1)^{\ell(w)}
\prod_{\alpha\in\Delta}
X_\alpha^{c_{w,\alpha}(\lambda)},
\]
and therefore $U_\lambda\in 
R:=\mathbb Z[X_\alpha:\alpha\in\Delta] \subset \mathbb Z[P]$, where $X_\alpha:=e^{-\alpha}$.

There exists a homomorphism of algebraic groups
\begin{equation}\label{principal}
  \psi : \mathrm{SL}_2(\mathbb{C}) \longrightarrow \mathrm G,
\end{equation}
which sends a regular unipotent element of $\mathrm{SL}_2(\mathbb{C})$ to a regular 
unipotent element of $\mathrm G$. Such a homomorphism is unique up to conjugacy in 
$\mathrm G$ and is called the \emph{principal} $\mathrm{SL}_2(\mathbb{C})$ in $\mathrm G$. 

The following formula proved in \cite{character} describes the restriction of the character 
$\Theta_\lambda$ of $V(\lambda)$ to the 
\emph{principal} $\mathrm{SL}_2(\mathbb{C})$ inside $\mathrm G$. 
It may be viewed as a specialization of the Weyl character formula obtained by 
evaluating it on the one-parameter subgroup 
$z \mapsto \psi(\mathrm{diag}(z, z^{-1}))$.

\begin{theorem}\label{prod}
    
Let $\mathrm G$ be a connected complex reductive group, and let 
\[
\psi : \mathrm{SL}_2(\mathbb{C}) \longrightarrow \mathrm G
\]
be the \emph{principal homomorphism} as in~\eqref{principal}. For 
$z \in \mathbb{C}^\times$, define
\[
\Theta_\lambda(z)
\;=\;
\Theta_\lambda\big(\psi(\mathrm{diag}(z, z^{-1}))\big),
\]
that is, the value of the character on the image under $\psi$ of the diagonal 
element $\mathrm{diag}(z, z^{-1}) \in \mathrm{SL}_2(\mathbb{C})$. Then the following \emph{product formula} holds:
\begin{equation}\label{productformula}
\Theta_\lambda(z)
\;=\;
z^{-2\langle \lambda, \rho^\vee \rangle}\,
\frac{\displaystyle \prod_{\alpha \in \Phi^+}
\big(1 - z^{2\langle \lambda + \rho,\, \alpha^\vee \rangle}\big)}
{\displaystyle \prod_{\alpha \in \Phi^+}
\big(1 - z^{2\langle \rho,\, \alpha^\vee \rangle}\big)},
\end{equation}

where both the numerator and the denominator are polynomials in $z^2$;
the prefactor $z^{-2\langle \lambda, \rho^\vee \rangle}$ may contribute an odd power of $z$.

\end{theorem}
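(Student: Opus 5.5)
The plan is to evaluate the Weyl character formula along the cocharacter $2\rho^\vee$ and reduce both the numerator and the denominator to products using the Weyl denominator identity for the dual root system.

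\textbf{Step 1: the torus element.} Choose $\psi$ so that $\psi(\mathrm{diag}(z,z^{-1}))=(2\rho^\vee)(z)$. This is possible because the semisimple element of the principal $\mathfrak{sl}_2$-triple can be taken to be $h=2\rho^\vee$, which satisfies $\langle\alpha,2\rho^\vee\rangle=2$ for every $\alpha\in\Delta$. Since $2\rho^\vee$ is a sum of coroots, it lies in $X_*(T)$, so $(2\rho^\vee)(z)$ is a genuine element of $T$. For $\mu\in X^*(T)$ we then have
\[
e^{\mu}\big((2\rho^\vee)(z)\big)=z^{2\langle\mu,\rho^\vee\rangle}.
\]

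\textbf{Step 2: evaluate the numerator.} The Weyl numerator evaluated at this element is
\[
\sum_{w\in W}(-1)^{\ell(w)}z^{2\langle w(\lambda+\rho),\rho^\vee\rangle}=\sum_{w\in W}(-1)^{\ell(w)}z^{2\langle \lambda+\rho,w^{-1}\rho^\vee\rangle}.
\]
This is the Weyl denominator of the dual root system $\Phi^\vee$, with the roles of weight and evaluation point swapped. More precisely, it is the alternating sum $\sum_w(-1)^{\ell(w)}e^{w\rho^\vee}$ evaluated at the "point" $\lambda+\rho$. By the denominator identity for $\Phi^\vee$,
\[
\sum_{w}(-1)^{\ell(w)}e^{w\rho^\vee}=\prod_{\alpha>0}\big(e^{\alpha^\vee/2}-e^{-\alpha^\vee/2}\big)=e^{\rho^\vee}\prod_{\alpha>0}\big(1-e^{-\alpha^\vee}\big).
\]
Pairing with $2(\lambda+\rho)$ gives
\[
z^{2\langle\lambda+\rho,\rho^\vee\rangle}\prod_{\alpha>0}\big(1-z^{-2\langle\lambda+\rho,\alpha^\vee\rangle}\big).
\]
To make this rigorous, I will treat the identity as an identity of functions on $\mathfrak h^*$: set $t=\log z$ and view both sides as exponential sums in $t\cdot(\lambda+\rho)$. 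This sidesteps any integrality issue with $\rho^\vee$.

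\textbf{Step 3: evaluate the denominator and take the quotient.} The same computation with $\lambda=0$ handles the denominator. In the quotient, the factor $z^{2\langle\lambda,\rho^\vee\rangle}$ survives. Rewriting each $1-z^{-a}$ as $-z^{-a}(1-z^{a})$ changes the overall monomial by
\[
z^{-\sum_\alpha 2\langle\lambda,\alpha^\vee\rangle}=z^{-4\langle\lambda,\rho^\vee\rangle}.
\]
The signs cancel between numerator and denominator, since both products have $|\Phi^+|$ factors. The net prefactor is therefore $z^{-2\langle\lambda,\rho^\vee\rangle}$, which matches \eqref{productformula}. Finally, the numerator and denominator are polynomials in $z^2$ because $\langle\lambda+\rho,\alpha^\vee\rangle$ is an integer. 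Note also that $\langle\lambda,2\rho^\vee\rangle$ is an integer, though $\langle\lambda,\rho^\vee\rangle$ itself may be a half-integer; this is where an odd power of $z$ can appear.

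\textbf{Main obstacle.} I expect the main difficulty to be justifying the evaluation. At $z$ a root of unity, or in general at non-regular points, both the numerator and the denominator can vanish. I will handle this by proving the identity for generic $z$, where the denominator is nonzero, and then extending it by continuity: the left side is a Laurent polynomial in $z$, so the right side extends as a rational function that equals it. A secondary point is the reductive (non-semisimple) case. There the central torus acts by a character, but the principal $\mathrm{SL}_2$ lands in the derived group, so the evaluation only sees the derived group and the same formula holds.
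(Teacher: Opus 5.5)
Your proof is correct and follows the same route as the product formula this paper cites from \cite{character}: realize $\psi(\mathrm{diag}(z,z^{-1}))$ as $(2\rho^\vee)(z)$, recognize the evaluated Weyl numerator and denominator as the Weyl denominator of $\Phi^\vee$ evaluated at $\lambda+\rho$ and at $\rho$, normalize the monomial prefactor, and handle non-regular $z$ through the identity of rational functions. One small point you should make explicit: $\rho$ need not lie in $X^*(T)$, but $\langle\rho,2\rho^\vee\rangle\in\mathbb Z$, so every exponent you write is an integer and evaluating $e^{w(\lambda+\rho)}$ along the cocharacter $2\rho^\vee$ is legitimate.
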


\section{Uniqueness of branching}\label{unique-single}
Let $\G$ be a complex simple algebraic group.  Let
$\G_0$ be a simple algebraic subgroup of $\G$ such that 
$\G_0$ contains a regular unipotent element of $\G$. The classification of such pairs $(\G, \G_0)$, goes back to Dynkin, see
\cite[Chapter IX, Exercise 20]{Bo}. 
\begin{enumerate}
\item ${\rm Sp}_{2n}(\mathbb{C})\subset 
{\rm SL}_{2n}(\mathbb{C})$,
\item ${\rm SO}_{2n+1}(\mathbb{C})
\subset {\rm SL}_{2n+1}(\mathbb{C})$,
\item ${\rm SO}_{2n+1}(\mathbb{C})
\subset  {\rm SO}_{2n+2}(\mathbb{C})$,
\item ${\rm \G}_2(\mathbb{C})\subset
{\rm Spin}_7(\mathbb{C})\subset
{\rm SO}_8(\mathbb{C})$,
\item ${\rm G}_2(\mathbb{C})\subset
{\rm SO}_7(\mathbb{C})\subset
{\rm SL}_7(\mathbb{C})$ and 
\item $F_4\subset E_6$. 
    \end{enumerate}

    The embedding of $\mathrm{Spin}_7(\mathbb{C})$ in $\mathrm{SO}_8(\mathbb{C})$ is given by the 8-dimensional spin representation of $\mathrm{Spin}_7(\mathbb{C})$. The embedding of $\mathrm{G}_2(\mathbb{C})$ into $\mathrm{Spin}_8(\mathbb{C})$ arises as the fixed-point subgroup of the triality automorphism of $\mathrm{Spin}_8(\mathbb{C})$. In terms of representations, the 8-dimensional spin representation of $\mathrm{Spin}_7(\mathbb{C})$ restricts to $\mathrm{G}_2(\mathbb{C})$ as the direct sum of the trivial representation and the 7-dimensional irreducible representation, yielding the embedding $\mathrm{G}_2(\mathbb{C}) \subset \mathrm{SO}_8(\mathbb{C})$. Furthermore, this 7-dimensional irreducible representation of $\mathrm{G}_2(\mathbb{C})$ preserves a quadratic form, yielding the embedding $\mathrm{G}_2(\mathbb{C}) \subset \mathrm{SO}_7(\mathbb{C})$, which sits naturally inside $\mathrm{SL}_7(\mathbb{C})$. The remaining embeddings are obtained as fixed points of an involution.

    In this section, for two dominant weights $\lambda$ and $\mu$ of $\G$, we classify all pairs 
$(\pi_\lambda, \pi_\mu)$ of irreducible finite–dimensional representations of $\G$ 
satisfying 
$$
\operatorname{res}_{G_0} \pi_\lambda \simeq \operatorname{res}_{G_0} \pi_\mu.
$$
We will show that $\pi_\lambda$ and $\pi_\mu$ coincide up to a possible outer automorphism of $\G$ that fixes $\G_0$. We fix a maximal torus and Borel subgroup 
$\T \subset \B$ of $\G$ such that $\T \cap \G_0$ is a maximal torus of $\G_0$ and 
$\B \cap \G_0$ is a Borel subgroup of $\G_0$. Let $\mathfrak g$ and $\mathfrak g_0$ denote the Lie algebras of $\G$ and $\G_0$ respectively. Let $\mathfrak h$ be the Lie algebra of $\T$, and set 
$\mathfrak h_0 = \mathfrak g_0 \cap \mathfrak h$, a Cartan subalgebra of $\mathfrak g_0$. 

Let $\Phi$ be the root system of $\mathfrak g$ with respect to $\mathfrak h$, and fix a basis 
$\Delta = \{\alpha_1, \alpha_2, \ldots, \alpha_n\}$ of simple roots. For each root 
$\alpha \in \Phi$, choose a root vector $X_\alpha$ so that 
$\{ X_\alpha : \alpha \in \Phi \}$ forms a Chevalley basis of $\mathfrak g$.

The main theorem of this section is as follows.

    \begin{theorem}\label{res_thm}
Let $(\G, \G_0)$ be a pair of simple complex algebraic groups with 
$\G_0 \subset \G$, and assume that $\G_0$ contains a regular unipotent 
element of $\G$. Assume moreover that \(\operatorname{rank}(\G_0)\ge 2\). 

Let $\lambda$ and $\mu$ be dominant weights in $X^\ast(\T)$ (with respect to the chosen Borel subgroup $\B$) such that
\[
\operatorname{res}_{\G_0} \pi_\lambda \simeq 
\operatorname{res}_{\G_0} \pi_\mu.
\]
Then there exists an automorphism
\[
\sigma \in \operatorname{Aut}(\G, \B, \T, \{X_\alpha\})
\]
such that
\[
\sigma(\G_0 \cap \T) = \G_0 \cap \T
\qquad\text{and}\qquad
\sigma(\lambda) = \mu.
\]
\end{theorem}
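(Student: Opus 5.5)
The plan is to pass from the isomorphism of $\G_0$-restrictions to an equality of functions of one variable, and then to read the highest weight off those functions. Since $\G_0$ contains a regular unipotent element of $\G$, a principal $\SL_2(\C)$ of $\G_0$ is also principal in $\G$; we conjugate so that $\psi$ lands in $\G_0$ with $\psi(\mathrm{diag}(z,z^{-1}))\in \T\cap\G_0$. An isomorphism $\operatorname{res}_{\G_0}\pi_\lambda\simeq\operatorname{res}_{\G_0}\pi_\mu$ gives equality of characters on $\T_0=\T\cap\G_0$, so $p(\Theta_\lambda)=p(\Theta_\mu)$ in $\mathbb Z[P_0]$. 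In particular $\Theta_\lambda(z)=\Theta_\mu(z)$, and Theorem~\ref{prod} yields
\[
z^{-2\langle\lambda,\rho^\vee\rangle}\prod_{\alpha\in\Phi^+}\bigl(1-z^{2\langle\lambda+\rho,\alpha^\vee\rangle}\bigr)
= z^{-2\langle\mu,\rho^\vee\rangle}\prod_{\alpha\in\Phi^+}\bigl(1-z^{2\langle\mu+\rho,\alpha^\vee\rangle}\bigr).
\]
Comparing lowest-order terms gives $\langle\lambda,\rho^\vee\rangle=\langle\mu,\rho^\vee\rangle$. By unique factorization into cyclotomic polynomials (equivalently, by comparing the multiplicities of primitive $m$-th roots of unity as zeros, inductively from the largest $m$), the multisets $\{\langle\lambda+\rho,\alpha^\vee\rangle:\alpha\in\Phi^+\}$ and $\{\langle\mu+\rho,\alpha^\vee\rangle:\alpha\in\Phi^+\}$ coincide. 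This is the only information taken from the principal $\SL_2$.

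This multiset alone does not determine $\lambda$ up to diagram automorphism; the rank-one examples show that restriction to $\SL_2$ is insufficient. We therefore use the full restriction to $\T_0$, which has rank at least $2$.
\begin{itemize}
\item The highest weight of $\operatorname{res}\pi_\lambda$ (maximal in the dominance order of $\G_0$) is $p(\lambda)$. Since $\B\cap\G_0$ is a Borel of $\G_0$, every weight of $\pi_\lambda$ is $\lambda$ minus a sum of simple roots, and each simple root restricts to a simple root of $\G_0$. Hence $p(\lambda)=p(\mu)$.
\item The map $p$ on $X^*(\T)$ has kernel spanned by the $(-1)$-eigenspace of the diagram automorphism $\theta$ defining $\G_0$ (cases (1), (2), (3), (6)), or by the triality-anti-invariant part (case (4) with $\G_0=G_2$). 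So $p(\lambda)=p(\mu)$ says that $\lambda$ and $\mu$ have the same $\theta$-symmetrization. Concretely, $a_i+a_{\theta(i)}=b_i+b_{\theta(i)}$ for the orbit sums of the coordinates of $\lambda=\sum a_i\varpi_i$ and $\mu=\sum b_i\varpi_i$.
\end{itemize}

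The remaining freedom is a difference $\lambda-\mu$ in the anti-invariant part. It is killed by the multiset condition above, or more efficiently by the quadratic Casimir: equal multisets give $\sum_{\alpha>0}\langle\lambda+\rho,\alpha^\vee\rangle^2=\sum_{\alpha>0}\langle\mu+\rho,\alpha^\vee\rangle^2$. Since $\sum_{\alpha>0}\langle x,\alpha^\vee\rangle^2$ is a $W$-invariant quadratic form, it is proportional to the Killing form. Combined with $\langle\lambda,\rho^\vee\rangle=\langle\mu,\rho^\vee\rangle$ and equal invariant part, this gives $\|\lambda_-\|=\|\mu_-\|$, where $x_-$ denotes the $\theta$-anti-invariant component.

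The case analysis then runs as follows.
\begin{itemize}
\item When $\theta$ has order $2$ and the anti-invariant space is spanned orbit-by-orbit, this alone is not enough, since there is one sign per orbit. I would use additional invariants: higher power sums $\sum_\alpha\langle\lambda+\rho,\alpha^\vee\rangle^{2k}$ from the full multiset, or better the whole restricted character.
\item One natural tool is to restrict further to a rank-one subgroup. Another is to compare the $\G_0$-character at the second-highest weight level, i.e.\ the multiplicities in $\operatorname{res}\pi_\lambda$ of the weights $p(\lambda)-\beta$ for simple roots $\beta$ of $\G_0$. These count which $\alpha_i$ with $a_i>0$ restrict to $\beta$, and so recover the sets $\{i,\theta(i)\}\cap\operatorname{supp}$.
\item Propagating this layer by layer, I expect to show $\mu\in\{\lambda,\theta\lambda\}$. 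Here the hypothesis $\operatorname{rank}\G_0\ge2$ enters, to couple the orbits so that the sign choice is global.
\item For the non-involution cases (4) and (5), the chains through $\mathrm{Spin}_7$, $\SO_7$ and $G_2$, I would treat each case directly, using the product-formula multiset and small dominance-order computations in the explicit root data.
\end{itemize}

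The main obstacle is this last step: showing that equal restricted characters force a single global diagram automorphism, rather than independent swaps on each $\theta$-orbit.
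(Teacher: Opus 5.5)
Your preliminary reductions are correct: the multiset identity from Theorem~\ref{prod}, the equality $p(\lambda)=p(\mu)$ of top restricted weights (equal $\theta$-orbit sums of the coordinates), and the Casimir identity. The gap is that the proposal stops exactly where the content of the theorem begins. The whole point is to show that the orbitwise ambiguities are resolved by a \emph{single} diagram automorphism. For that step you only list tools and write ``I expect to show''. The non-involutive cases, notably $G_2\subset\SL_7$ where $p$ has a $4$-dimensional kernel, are also only announced.

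The listed tools, as stated, do not supply the missing information:
\begin{enumerate}
\item The power sums $\sum_{\alpha>0}\langle\lambda+\rho,\alpha^\vee\rangle^{2k}$ are functions of the principal multiset you already have, so they add nothing.
\item The Casimir identity is a single quadratic equation on the whole anti-invariant part, and the invariant form couples different orbits. So it does not even give one absolute value per orbit.
\item The multiplicity of $p(\lambda)-\beta$ in the restriction is $\#\{i: p(\alpha_i)=\beta,\ \langle\lambda,\alpha_i^\vee\rangle>0\}$. This is a count, not the set $\{i,\theta(i)\}\cap\operatorname{supp}\lambda$, so it cannot see the orientation you need.
\end{enumerate}

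The missing idea is the paper's way of extracting subsystem data from the full restricted character and then gluing. By \cite[Lemma~4.4]{unique_branch}, the hypothesis is $u_\lambda=u_\mu$, where $u_\nu=p(U_\nu)$ is the projected normalized Weyl numerator in the variables $X_\beta=e^{-\beta}$. Choose $I\subset\Delta$ with $p(I)\cap p(\Delta\setminus I)=\varnothing$, and set $X_\beta=0$ for all simple roots $\beta$ of $\G_0$ outside $p(I)$. By \cite[Lemma~4.5]{unique_branch} the result is the same hypothesis for the smaller pair attached to $I$, in the coordinates $n_j=\langle\lambda+\rho,\alpha_j^\vee\rangle$, $j\in I$.

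Induction on rank then gives two things. On a connected piece, the coordinates agree up to a diagram automorphism. On the rank-one tails, you get the unordered pairs, e.g.\ $\{n_1,n_k\}=\{m_1,m_k\}$ in type $A_k$, which is strictly more than your $n_1+n_k=m_1+m_k$. Only at this point does the principal multiset enter, to couple the pieces. If $n_1=m_k\neq m_1$, compare $\min\{n_1+n_2,n_k+n_{k-1}\}=\min\{n_k+n_2,n_1+n_{k-1}\}$ and continue along the chain. This forces $n_i=n_{k+1-i}$, hence $\mu=\sigma(\lambda)$. Your plan needs some such coupling argument to become a proof.

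Your identity $p(\lambda)=p(\mu)$ does settle $G_2\subset\mathrm{Spin}_7$ at once. It gives $n_2=m_2$ and $n_1+n_3=m_1+m_3$. Equality of the largest exponents $2n_1+2n_2+n_3$ then forces $n_1=m_1$.
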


\begin{proof} We view $\pi_\lambda$ and $\pi_\mu$ as finite-dimensional representations
of the Lie algebra $\mathfrak g$. 
For a dominant weight $\nu$, set
\[
u_\nu:=p(U_\nu),
\]
where
\[
U_\nu
=
e^{-(\nu+\rho)}
\sum_{w\in W}(-1)^{\ell(w)}e^{w(\nu+\rho)}
\]
is the normalized Weyl numerator.

By \cite[Section~4.1, Lemma~4.4]{unique_branch}, the hypothesis
\[
\operatorname{res}_{\G_0}\pi_\lambda
\simeq
\operatorname{res}_{\G_0}\pi_\mu
\]
is equivalent to
\[
u_\lambda=u_\mu.
\tag{1}
\]

We first treat the cases in which $\G_0$ is obtained as the fixed-point
subgroup of a diagram automorphism of $\G$. These are the folding cases
\[
\mathrm{Sp}_{2n}\subset \mathrm{SL}_{2n},\qquad
\mathrm{SO}_{2n+1}\subset \mathrm{SL}_{2n+1},\qquad
\mathrm{SO}_{2n+1}\subset \mathrm{SO}_{2n+2},
\]
the triality case
\[
G_2\subset \mathrm{Spin}_8,
\]
and the exceptional folding
\[
F_4\subset E_6.
\]
In these cases we may write $
\mathfrak g_0=\mathfrak g^\theta$
for a nontrivial diagram automorphism $\theta$ of $\mathfrak g$. 

For these fixed-point cases, we prove the assertion by induction on the
rank of \(\mathfrak g\). The induction hypothesis is the statement of the
theorem for all fixed-point pairs of strictly smaller rank.

Choose a decomposition
\[
\Delta=\Delta_1\sqcup \Delta_2
\]
such that $p(\Delta_1)$ is connected and $p(\Delta_2)$ consists of a
single simple root. For $i\in\{1,2\}$, let $\Phi_i$ be the root subsystem
generated by $\Delta_i$, let $\mathfrak g^{(i)}$ be the corresponding
semisimple Lie subalgebra, and set
\[
\mathfrak g_0^{(i)}:=\mathfrak g^{(i)}\cap \mathfrak g_0.
\]
Then $(\mathfrak g^{(i)},\mathfrak g_0^{(i)})$ is again a
finite-order fixed-point pair of strictly smaller rank. In the cases
where $\mathfrak g^{(i)}$ is not simple, the induction hypothesis is
applied to the corresponding semisimple fixed-point pair; the rank-one
factors are treated as the base cases.

Let $\{\varpi'_\alpha:\alpha\in\Delta_i\}$
be the fundamental weights of the root system $\Phi_i$, with respect to
$\mathfrak g^{(i)}\cap\mathfrak h$ and
$\mathfrak g^{(i)}\cap\mathfrak b$. We define
\[
\lambda^{(i)}
=
\sum_{\alpha\in\Delta_i}
\langle \lambda+\rho,\alpha^\vee\rangle\varpi'_\alpha,
\qquad
\mu^{(i)}
=
\sum_{\alpha\in\Delta_i}
\langle \mu+\rho,\alpha^\vee\rangle\varpi'_\alpha,
\]
and let 
\[
\rho_i:=\sum_{\alpha\in\Delta_i}\varpi'_\alpha.
\]

Let $I=\Delta_i$. We define the polynomial projection
\[
\pi_I:\mathbb Z[X_\alpha:\alpha\in\Delta]
\longrightarrow
\mathbb Z[X_\alpha:\alpha\in I]
\]
by
\[
\pi_I(X_\alpha)
=
\begin{cases}
X_\alpha,&\alpha\in I,\\
0,&\alpha\notin I.
\end{cases}
\]
Similarly we define
\[
\pi_I^0:
\mathbb Z[X_{\beta'}:\beta'\in\Delta(\T_0,\B_0)]
\longrightarrow
\mathbb Z[X_{\beta'}:\beta'\in p(I)]
\]
by
\[
\pi_I^0(X_{\beta'})
=
\begin{cases}
X_{\beta'},&\beta'\in p(I),\\
0,&\beta'\notin p(I).
\end{cases}
\]

Let $
p_i:\mathbb Z[X_\alpha:\alpha\in I]
\longrightarrow
\mathbb Z[X_{\beta'}:\beta'\in p(I)]$
be the restriction map for the smaller pair
$(\mathfrak g^{(i)},\mathfrak g_0^{(i)})$. For the subsets \(I=\Delta_i\) chosen below, we have
\[
p(I)\cap p(\Delta\setminus I)=\varnothing.
\]
Hence the maps satisfy
\[
\pi_I^0\circ p=p_i\circ \pi_I.
\tag{2}
\]
Now applying $\pi_I^0$ to the equality \((1)\), we get 
\[
\pi_I^0(p(U_\lambda))
=
\pi_I^0(p(U_\mu)).
\]
Using \((2)\), this becomes
\[
p_i(\pi_I(U_\lambda))
=
p_i(\pi_I(U_\mu)).
\tag{3}
\]

By \cite[Section 4.1, Lemma~4.5]{unique_branch},
\[
\pi_I(U_\lambda)
=
U^{(i)}_{\lambda^{(i)}-\rho_i},
\qquad
\pi_I(U_\mu)
=
U^{(i)}_{\mu^{(i)}-\rho_i}.
\]
Thus \((3)\) gives
\[
p_i\left(U^{(i)}_{\lambda^{(i)}-\rho_i}\right)
=
p_i\left(U^{(i)}_{\mu^{(i)}-\rho_i}\right).
\tag{4}
\]

Applying \cite[Section~4.1, Lemma~4.4]{unique_branch} to the smaller pair
$(\mathfrak g^{(i)},\mathfrak g_0^{(i)})$, we obtain
\[
\operatorname{res}_{\mathfrak g_0^{(i)}}
\pi_{\lambda^{(i)}-\rho_i}
\simeq
\operatorname{res}_{\mathfrak g_0^{(i)}}
\pi_{\mu^{(i)}-\rho_i}.
\tag{5}
\]

Since
\[
\operatorname{rank}\mathfrak g^{(i)}
<
\operatorname{rank}\mathfrak g,
\]
the induction hypothesis applies to the pair
$(\mathfrak g^{(i)},\mathfrak g_0^{(i)})$. Hence there exists an
automorphism
\[
\sigma_i\in
\operatorname{Aut}(\G^{(i)},\B_i,\T_i,\{X_\alpha:\alpha\in\Phi_i\})
\]
such that
\[
\sigma_i(\lambda^{(i)}-\rho_i)
=
\mu^{(i)}-\rho_i.
\]
Since every diagram automorphism preserves
\[
\rho_i=\sum_{\alpha\in\Delta_i}\varpi'_\alpha,
\]
we get
\[
\sigma_i(\lambda^{(i)})=\mu^{(i)}.
\]
Since \(\sigma_i\) belongs to the corresponding automorphism group
preserving the fixed-point data, the representations
\(\pi_{\lambda^{(i)}}\) and \(\pi_{\sigma_i(\lambda^{(i)})}\) have
isomorphic restrictions to \(\mathfrak g_0^{(i)}\). Hence
\begin{equation}\label{tail}
\operatorname{res}_{\mathfrak g_0^{(i)}}
\pi_{\lambda^{(i)}}
\simeq
\operatorname{res}_{\mathfrak g_0^{(i)}}
\pi_{\mu^{(i)}},
\qquad i=1,2.
\end{equation}
These isomorphisms are used in the corresponding case-by-case
analysis for the fixed-point pairs.
In Dynkin's list, the remaining cases are
\[
G_2\subset \mathrm{Spin}_7,\qquad
G_2\subset SO_7,\qquad
G_2\subset SL_7.
\]
The embedding \(G_2\subset \mathrm{Spin}_7\) is the stabilizer-type
embedding arising from the \(8\)-dimensional spin representation of
\(\mathrm{Spin}_7\), while \(G_2\subset SO_7\) is given by the
\(7\)-dimensional irreducible representation of \(G_2\). The embedding
\(G_2\subset SL_7\) is obtained by composing
\[
G_2\subset SO_7\subset SL_7.
\]
These embeddings are not fixed-point subgroups of diagram automorphisms
of \(\mathrm{Spin}_7\), \(SO_7\), or \(SL_7\). Hence the preceding
induction argument and the isomorphism \eqref{tail} do not apply to
them directly. We therefore treat these cases separately by explicit
root-theoretic computations.

Let \(A=\{n_1,n_2,\dots,n_k\}\) and \(B=\{m_1,m_2,\dots,m_k\}\) be finite multisets of positive integers.  
Observe that
\[
\prod_{i=1}^k \frac{1-t^{n_i}}{1-t}
=
\prod_{j=1}^k \frac{1-t^{m_j}}{1-t}
\qquad\Longleftrightarrow\qquad
A=B.
\]

Since 
\(\operatorname{res}_{\G_0} \pi_\lambda \simeq 
\operatorname{res}_{\G_0} \pi_\mu\),
and because the principal homomorphism 
\(\mathrm{SL}_2(\mathbb{C}) \to \G\) factors through \(\G_0\),  
Theorem~\ref{prod} implies the equality of multisets
\begin{equation}\label{principal_numbers}
\{\langle \lambda+\rho, \alpha^\vee \rangle : \alpha \in \Phi^+\}
=
\{\langle \mu+\rho, \alpha^\vee \rangle : \alpha \in \Phi^+\}.
\end{equation}

We now consider the various possibilities for the pair $(\G,\G_0)$.
Throughout, we use the Bourbaki numbering of simple roots
\cite[Planche]{bourbaki_lie_4_6}, and we write
\[
n_i=\langle \lambda+\rho,\alpha_i^\vee\rangle,
\qquad
m_i=\langle \mu+\rho,\alpha_i^\vee\rangle .
\]

\subsection*{Case 1:} $\G=\mathrm{SL}_{k+1}(\mathbb{C})$ and 
\[
\G_0 =
\begin{cases}
\mathrm{Sp}_{2n}(\mathbb{C}) &\text{if } k=2n-1,\\[2mm]
\mathrm{SO}_{2n+1}(\mathbb{C}) &\text{if } k=2n.
\end{cases}
\]
Let $\Delta=\{\alpha_1,\dots,\alpha_k\}$ and set
\[
\Delta_1=\{\alpha_2,\alpha_3,\dots,\alpha_{k-1}\},
\qquad
\Delta_2=\{\alpha_1,\alpha_k\}.
\]

We prove that either $\lambda=\mu$ or  
$\lambda=\sigma(\mu)$ for the nontrivial diagram automorphism  
$\sigma\in \mathrm{Aut}(\G,\B,\T,\{X_\alpha\})$.
The argument proceeds by induction on $k$, the claim being trivial for 
$k=2$.  Assume that it holds for $k-1$.

After replacing \(\mu\) by its image under the nontrivial diagram
automorphism of \(A_k\), if necessary, we may assume that $\lambda^{(1)}=\mu^{(1)}$. That is
\[
n_i=m_i,\qquad 2\le i\le k-1.
\]
Moreover, the rank-one tail gives
\[
\{n_1,n_k\}=\{m_1,m_k\}.
\]
Using the equality 
\eqref{principal_numbers},
we get the equality of following multsets:
\begin{align*}
  \{n_1, n_1+n_2,\dots, n_1+n_2+\cdots+n_{k-1}+n_k,
n_k+n_{k-1}+\cdots+n_2,\dots, n_k+n_{k-1}, n_k\}= \\
\{m_1, m_1+n_2,\dots, m_1+n_2+\cdots+n_{k-1}+m_k,
m_k+n_{k-1}+\cdots+n_2,\dots, m_k+n_{k-1}, m_k\}. 
\end{align*}
Assume that $n_1\neq m_1$. 
Then, we get that   
\begin{align*}
  \{n_1+n_2, n_1+n_2+n_3,\dots, n_1+\cdots+n_{k-1},
n_k+\cdots+n_2,\dots, n_k+n_{k-1}\}= \\
\{n_k+n_2,n_k+n_2+n_3,\dots, n_k+n_2+\cdots+n_{k-1},
n_1+n_{k-1}+\cdots+n_2,\dots, n_1+n_{k-1}\}. 
\end{align*}
Now, we have
$${\rm min}\{n_1+n_2, n_k+n_{k-1}\}
={\rm min}\{n_k+n_2, n_1+n_{k-1}\}.$$
Since $n_1\neq n_k$,
we conclude that $n_2=n_{k-1}$. 
Then we get that 
$${\rm min}\{n_1+n_2+n_3, n_k+n_{k-1}+n_{k-2}\}
={\rm min}\{n_k+n_2+n_3, n_1+n_{k-1}+n_{k-2}\}.$$
which is the same as 
$${\rm min}\{n_1+n_2+n_3, n_k+n_2+n_{k-2}\}
={\rm min}\{n_k+n_2+n_3, n_1+n_2+n_{k-2}\}.$$
Which implies that $n_3=n_{k-2}$. Using induction on the integer
$i$, we conclude that $n_i=n_{k-i+1}$, for all $2\leq i\leq k-1$. 
Thus, we get that $\pi_\lambda$ is either isomorphic to $\pi_\mu$
as ${\rm SL}_{k+1}(\mathbb C)$ representations or $\pi_\lambda\simeq \pi_\mu^\vee$. 

\subsection*{Case 2:} $\G=\mathrm{SO}_{2n+2}(\mathbb{C})$ and $\G_0=\mathrm{SO}_{2n+1}(\mathbb{C})$ for $n\ge1$. The case \(n=2\), namely \((\mathrm{SO}_6(\mathbb C),\mathrm{SO}_5(\mathbb C))\), follows from Case~1, since \[
\mathfrak{so}_6\simeq\mathfrak{sl}_4,
\qquad
\mathfrak{so}_5\simeq\mathfrak{sp}_4,
\]

We again use induction on $n$, assuming the statement is known for
$n-1$. Let
\[
\Delta_1=\Delta\setminus\{\alpha_1\},
\qquad
\Delta_2=\{\alpha_1\}.
\]
From \eqref{tail}, the weights $\lambda^{(1)}$ and $\mu^{(1)}$
agree up to a possible diagram automorphism of type $D_{n}$, and
\[
n_1=m_1.
\]
Thus $\lambda=\sigma(\mu)$ for some
$\sigma\in \mathrm{Aut}(\G,\B,\T,\{X_\alpha\})$, completing the proof in
this case.

\subsection*{Case 3:} We now consider the case where $\G=\mathrm{Spin}_7(\mathbb{C})$ and 
$\G_0=G_2$. Let $\pi_\lambda$ and $\pi_\mu$ be two irreducible representations of $\G$ such that 
\[
\operatorname{res}_{\G_0}\, \pi_\lambda \simeq 
\operatorname{res}_{\G_0}\, \pi_\mu .
\]
 The subgroup $\G_0$ contains a regular unipotent element of $\G$;
hence the principal $\mathrm{SL}_2(\mathbb C)$ of $\G$ factors through $\G_0$.
Therefore
\[
\operatorname{res}_{\G_0} V_\lambda \simeq \operatorname{res}_{\G_0} V_\mu
\;\Longrightarrow\;
\operatorname{res}_H V_\lambda \simeq \operatorname{res}_H V_\mu ,
\]
where $H$ is the image of the principal homomorphism $\psi:\mathrm{SL}_2(\mathbb C) \to\G$. 
From the equality \eqref{principal_numbers} we get that the multiset $$\{2n_1+2n_2+n_3, n_1+2n_2+n_3, 
2n_2+n_3, n_1+n_2+n_3, n_2+n_3, n_1+n_2, n_1, n_2, n_3\}$$
is equal to the multiset 
$$\{2m_1+2m_2+m_3, m_1+2m_2+m_3, 
2m_2+m_3, m_1+m_2+m_3, m_2+m_3, m_1+m_2, m_1, m_2, m_3\}.$$ The largest two elements of the principal \(\mathrm{SL}_2\)-multiset are
\[
2n_1+2n_2+n_3
\qquad\text{and}\qquad
n_1+2n_2+n_3.
\]
Comparing them with the corresponding \(m\)-expressions gives
\[
n_1=m_1,\qquad 2n_2+n_3=2m_2+m_3.
\]
Comparing the sums of all elements in the two multisets gives
\[
10n_2+6n_3=10m_2+6m_3.
\]
These two linear equations imply \(n_2=m_2\) and \(n_3=m_3\). Hence
\(\lambda=\mu\).

\subsection*{Case 4} We now consider the case where $\G = \mathrm{SO}_7(\mathbb{C})$ and 
$\G_0 = G_2$.
 This case reduces to Case~3 $(\mathrm{Spin}_7,\mathrm{G}_2)$ as follows.  
The simply connected cover $\pi:\mathrm{Spin}_7\to\mathrm{SO}_7$ is an isogeny of degree $2$.  
The inclusion $\mathrm{G}_2\subset\mathrm{SO}_7$ lifts uniquely to $\mathrm{G}_2\subset\mathrm{Spin}_7$ because $\mathrm{G}_2$ is simply connected.  
Every finite‑dimensional representation of $\mathrm{SO}_7$ lifts via $\pi$ to a representation of $\mathrm{Spin}_7$.  
If $\lambda,\mu$ are dominant weights of $\mathrm{SO}_7$ with $\operatorname{res}_{\mathrm{G}_2}\pi_\lambda\simeq\operatorname{res}_{\mathrm{G}_2}\pi_\mu$, then the corresponding lifted representations $\tilde\pi_\lambda,\tilde\pi_\mu$ of $\mathrm{Spin}_7$ have isomorphic restrictions to $\mathrm{G}_2$.  
Then by Case~3, the lifted highest weights are equal. Therefore the original
highest weights for \(SO_7\) are equal, and hence $
\pi_\lambda\simeq\pi_\mu.$

\subsection*{Case 5:} We now consider the case where $\G = \mathrm{SO}_8(\mathbb{C})$ and 
$\G_0 = \mathrm{Spin}_7(\mathbb{C})$, where the embedding of $\G_0$ into $\G$
is induced by the $8$-dimensional spin representation.  Since this 
representation does not factor through $\mathrm{SO}_7(\mathbb{C})$, we may, and
do, regard $\mathrm{Spin}_7(\mathbb{C})$ as a subgroup of 
$\mathrm{Spin}_8(\mathbb{C})$, the simply connected cover of $\mathrm{SO}_8$.

The advantage of this setting is that the group $\mathrm{Spin}_8(\mathbb{C})$ has triality automorphisms 
$\theta \in \mathrm{Out}(\mathrm{Spin}_8)$ $\cong S_3$, which act transitively on
the three $8$-dimensional irreducible representations: the vector
representation and the two half-spin representations.  
The subgroup $\mathrm{Spin}_7(\mathbb{C})$ embeds in 
$\mathrm{Spin}_8(\mathbb{C})$ in such a way that one of these three 
representations restricts to the $8$-dimensional irreducible representation 
of $\mathrm{Spin}_7(\mathbb{C})$.  
Applying a suitable element of $\mathrm{Out}(\mathrm{Spin}_8)$, we may therefore
assume that this representation is the vector representation of 
$\mathrm{Spin}_8(\mathbb{C})$.  
In particular, up to a triality automorphism, we may reduce to the standard
embedding 
\[
\mathrm{Spin}_7(\mathbb{C}) \subset \mathrm{Spin}_8(\mathbb{C}).
\]
Under this standard embedding, the restriction of highest weights and the corresponding
principal $\mathrm{SL}_2$--weights are identical to the situation considered
earlier for the pair
\[
\SO(2n+1) \subset \SO(2n+2)
\quad\text{with } n=3.
\]
Thus all arguments from the previous case apply verbatim, and we conclude that
\[
\mathrm{res}_{\G_0}\pi_\lambda \simeq 
\mathrm{res}_{\G_0}\pi_\mu
\quad\Longrightarrow\quad
\lambda = \sigma(\mu)
\]
for some $\sigma \in \mathrm{Aut}(\G,\B,\T,\{X_\alpha\})$.

\subsection*{Case 6:} In this case, we take $\G$ to be the simple algebraic group of type $E_6$, and 
$\G_0$ to be the subgroup of type $F_4$ fixed by the non-trivial diagram automorphism in 
${\rm Aut}(\G,\B,\T,\{X_\alpha\})$.  
We decompose the set of simple roots as
\[
\Delta_1=\Delta\setminus\{\alpha_2\}, \qquad   
\Delta_2=\{\alpha_2\}.
\]
Applying equation~\eqref{tail}, we obtain
\[
n_2 = m_2, \qquad 
\lambda^{(1)} = \mu^{(1)} \ \text{or}\ \lambda^{(1)} = \sigma(\mu^{(1)}),
\]
where $\sigma$ is the restriction to the subsystem
\(\Delta_1\) of the nontrivial diagram automorphism of $E_6$ and this automorphism
fixes \(\alpha_2\). Since \(n_2=m_2\), the equality on \(\Delta_1\) together
with the equality at \(\alpha_2\) implies
\[
\lambda=\sigma(\mu)
\]
for \(\sigma=1\) or for the nontrivial diagram automorphism of \(E_6\). 

\subsection*{Case 7:} We now consider the case where $\G = \mathrm{SO}_8(\mathbb{C})$ and $\G_0 = \mathrm{G}_2(\mathbb{C})$. 
Let $\pi_\lambda$ and $\pi_\mu$ be two irreducible representations of $\G$ such that 
\[
\operatorname{res}_{\G_0}\, \pi_\lambda \simeq 
\operatorname{res}_{\G_0}\, \pi_\mu .
\]
 The subgroup $\G_0$ contains a regular unipotent element of $\G$;
hence the principal $\mathrm{SL}_2(\mathbb C)$ of $\G$ factors through $\G_0$.
Therefore
\[
\operatorname{res}_{\G_0} V_\lambda \simeq \operatorname{res}_{\G_0} V_\mu
\;\Longrightarrow\;
\operatorname{res}_H V_\lambda \simeq \operatorname{res}_H V_\mu ,
\]
where $H$ is the image of the principal homomorphism $\psi:\mathrm{SL}_2(\mathbb C) \to\G$. 
From the equality \eqref{principal_numbers} we get that the multiset
   \begin{align*}
       \{n_1, n_2, n_3, n_4, n_1+n_2, n_3+n_2, n_4+n_2, n_1+n_2+n_3, n_3+n_2+n_4,\\
   n_4+n_2+n_1, n_1+n_2+n_3+n_4, 
   n_1+2n_2+n_3+n_4\}
   \end{align*}
   is equal to the multiset
   \begin{align*}
       \{m_1, m_2, m_3, m_4, m_1+m_2, m_3+m_2, m_4+m_2, m_1+m_2+m_3, m_3+m_2+m_4,\\
   m_4+m_2+m_1, m_1+m_2+m_3+m_4, 
   m_1+2m_2+m_3+m_4\}.
   \end{align*}

Comparing the largest and second largest elements in these multisets, we conclude that  
\[
n_2 = m_2 \,\, \text{and also} \,\, 
n_1 + n_3 + n_4 = m_1 + m_3 + m_4 .
\]
Since triality permutes the outer nodes, we may assume
\[
n_1\le n_3\le n_4,\qquad m_1\le m_3\le m_4.
\]
After deleting the common occurrence \(n_2=m_2\) from the two equal
multisets, the smallest remaining elements are \(n_1\) and \(m_1\);
hence \(n_1=m_1\). Deleting now the common occurrences \(n_1=m_1\) and
\(n_1+n_2=m_1+m_2\), the smallest remaining elements are \(n_3\) and
\(m_3\), so \(n_3=m_3\). Finally, from
\[
n_1+n_3+n_4=m_1+m_3+m_4
\]
we get \(n_4=m_4\). Thus
\[
\{n_1,n_3,n_4\}=\{m_1,m_3,m_4\}.
\]
Thus $\lambda$ and $\mu$ differ only by a permutation of the simple roots 
$\alpha_1$, $\alpha_3$ and $\alpha_4$, i.e.,
\[
\pi_\lambda \simeq \pi_\mu^\sigma,
\]
where $\sigma$ is a diagram automorphism of $\G$.

Finally, note that the subgroup $\G_2 \subset \G$ is fixed by the full diagram automorphism group ${\rm Aut}(\G,\B,\T,\{X_\alpha\})$. Therefore, any two irreducible finite-dimensional representations of $\G$ with isomorphic restriction to $\G_2$ must be isomorphic up to an outer automorphism of $\G$.

 \subsection*{Case 8:} Finally we come to the case where $\G={\rm SL}_7(\mathbb C)$
   and $\G_0=\G_2 \subseteq {\rm SL}_7(\mathbb C)$. Let \(\Delta=\{\alpha_1,\dots,\alpha_6\}\) be the simple roots of \(A_6\), and write
\[
n_i=\langle \lambda+\rho,\alpha_i^\vee\rangle,\qquad
m_i=\langle \mu+\rho,\alpha_i^\vee\rangle \qquad (1\le i\le 6).
\]
Let \(\{\beta_1,\beta_2\}\) be the simple roots of \(G_2\), with \(\beta_1\) short and \(\beta_2\) long. For the standard \(7\)-dimensional embedding, the restriction map \(p:\mathfrak h^*\to \mathfrak h_0^*\) satisfies
\[
p(\alpha_1)=p(\alpha_3)=p(\alpha_4)=p(\alpha_6)=\beta_1,\qquad
p(\alpha_2)=p(\alpha_5)=\beta_2.
\tag{8}
\]
This follows immediately from the weights of the \(7\)-dimensional representation of \(G_2\).

We set \(X_i=e^{-\alpha_i}\), \(x=e^{-\beta_1}\), \(y=e^{-\beta_2}\). By \cite[Section~4.1, Lemma~4.4]{unique_branch}, the hypothesis
\(\operatorname{res}_{G_2}\pi_\lambda \simeq \operatorname{res}_{G_2}\pi_\mu\) is equivalent to
\(u_\lambda=u_\mu\), where $u_\lambda, u_\mu$ are the projections of the respective normalized Weyl numerators.

\begin{enumerate}
\item Setting \(y=0\) in \(u_\lambda=u_\mu\) isolates the subsystem generated by
\(\{\alpha_1,\alpha_3,\alpha_4,\alpha_6\}\), which is of type
\(A_1\times A_2\times A_1\). This yields
\[
\{n_1,n_6,n_3,n_4,n_3+n_4\}
=
\{m_1,m_6,m_3,m_4,m_3+m_4\}.
\tag{9}
\]
\item Setting \(x=0\) in \(u_\lambda=u_\mu\) isolates the subsystem generated by
\(\{\alpha_2,\alpha_5\}\), of type \(A_1\times A_1\), giving
\[
\{n_2,n_5\}=\{m_2,m_5\}.
\tag{10}
\]
\end{enumerate}

Since \(G_2\) contains the principal \(\SL_2(\mathbb C)\) of \(\SL_7(\mathbb C)\), equation
   \eqref{principal_numbers}  implies the equality of multisets of all consecutive sums:
\[
\{\,n_i+n_{i+1}+\cdots+n_j : 1\le i\le j\le 6\,\}
=
\{\,m_i+m_{i+1}+\cdots+m_j : 1\le i\le j\le 6\,\}.
\tag{11}
\]
We also have \[\sum_{i=1}^6 n_i=\sum_{i=1}^6 m_i. \tag{12}\]
 Using (9), (10), (11) and (12) and following the same arguments as in the previous cases
we get that either $\pi_\lambda\simeq \pi_\mu$
or $\pi_\lambda\simeq \pi_\mu^\vee$. 
 \end{proof}

The following result addresses the classical embedding $\mathrm{SO}_{2n} \subset \mathrm{SL}_{2n}$ and describes the relationship between irreducible representations under restriction.

\begin{corollary}
Let $\pi_1$ and $\pi_2$ be irreducible finite-dimensional representations of
 $\mathrm{SL}_{2n}(\mathbb C)$ such that
\[
\operatorname{res}_{\mathrm{SO}_{2n}}\pi_1\simeq
\operatorname{res}_{\mathrm{SO}_{2n}}\pi_2.
\]
Then either $\pi_1\simeq\pi_2$ or $\pi_1\simeq\pi_2^\vee$ (the contragredient).
\end{corollary}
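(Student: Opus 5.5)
The plan is to reduce the statement to Case~1 of Theorem~\ref{res_thm}, applied to the pair $(\mathrm{SL}_{2n}(\mathbb C),\mathrm{Sp}_{2n}(\mathbb C))$. The pair $(\mathrm{SL}_{2n},\mathrm{SO}_{2n})$ is not in Dynkin's list, so the theorem cannot be used directly. Indeed, a regular unipotent of $\mathrm{SO}_{2n}$ has Jordan type $(2n-1,1)$ in $\mathrm{SL}_{2n}$, so it is not regular there. The point is that $\mathrm{SO}_{2n}$ and $\mathrm{Sp}_{2n}$ share a common maximal torus inside $\mathrm{SL}_{2n}$.

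\textbf{Step 1 (a common torus).} Realize $\mathrm{SO}_{2n}$ as the isometry group of the symmetric form with antidiagonal Gram matrix $J=(\delta_{i,2n+1-j})$. Realize $\mathrm{Sp}_{2n}$ as the isometry group of the alternating form with Gram matrix $J'=\begin{pmatrix}0&J_n\\-J_n&0\end{pmatrix}$. Then both groups contain the rank $n$ torus
\[
T_0=\{\diag(t_1,\dots,t_n,t_n^{-1},\dots,t_1^{-1})\}\subset \mathrm{SL}_{2n}(\mathbb C),
\]
and $T_0$ is a maximal torus of each of them. All embeddings $\mathrm{SO}_{2n}\subset\mathrm{SL}_{2n}$, respectively $\mathrm{Sp}_{2n}\subset\mathrm{SL}_{2n}$, are conjugate in $\mathrm{GL}_{2n}$, since nondegenerate symmetric (respectively alternating) forms are all equivalent. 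Conjugation by $\mathrm{GL}_{2n}$ does not change the isomorphism class of a representation of $\mathrm{SL}_{2n}$ restricted to the image, because it acts by inner automorphisms up to the centre. So we may use these models without loss of generality.

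\textbf{Step 2 (transfer to $\mathrm{Sp}_{2n}$).} Assume $\operatorname{res}_{\mathrm{SO}_{2n}}\pi_1\simeq\operatorname{res}_{\mathrm{SO}_{2n}}\pi_2$. Then the characters $\Theta_{\pi_1}$ and $\Theta_{\pi_2}$ agree on $T_0$, since $T_0\subset\mathrm{SO}_{2n}$. A finite-dimensional representation of the connected reductive group $\mathrm{Sp}_{2n}$ is determined up to isomorphism by its character on a maximal torus: semisimplicity gives a decomposition into irreducibles, and the torus character determines their highest weights with multiplicities. Since $T_0$ is a maximal torus of $\mathrm{Sp}_{2n}$, it follows that $\operatorname{res}_{\mathrm{Sp}_{2n}}\pi_1\simeq\operatorname{res}_{\mathrm{Sp}_{2n}}\pi_2$.

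\textbf{Step 3 (conclusion).} Theorem~\ref{res_thm}, Case~1, applies to $(\mathrm{SL}_{2n},\mathrm{Sp}_{2n})$ when $n\ge2$, where $\operatorname{rank}\mathrm{Sp}_{2n}\ge2$. It gives that $\pi_1\simeq\pi_2$, or that the two representations differ by the nontrivial diagram automorphism of $A_{2n-1}$, which means $\pi_1\simeq\pi_2^\vee$. For $n=1$, the group $\mathrm{SO}_2=T_0$ is itself the maximal torus of $\mathrm{Sp}_2=\mathrm{SL}_2$, and every irreducible representation of $\mathrm{SL}_2$ is self-dual, so the claim is immediate.

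The main obstacle is Step~1. One must check carefully that the two chosen forms produce literally the same diagonal torus inside $\mathrm{SL}_{2n}$, and that it is maximal in both groups. Once this is verified, the rest is formal.
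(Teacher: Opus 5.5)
Your proposal is correct and follows essentially the paper's argument: a maximal torus shared by $\SO_{2n}$ and $\Sp_{2n}$ transfers equality of characters to $\Sp_{2n}$, and Case~1 of Theorem~\ref{res_thm} for $(\SL_{2n},\Sp_{2n})$ then concludes. Your explicit check that the two antidiagonal forms give the same diagonal torus, and your separate treatment of $n=1$ (where the theorem's rank hypothesis fails), fill in details the paper leaves implicit.
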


 \begin{proof}
     Note that there exists a Cartan subalgebra $\mathfrak{h}_0$
     contained in both $\mathfrak{so}_{2n}$ and 
     $\mathfrak{sp}_{2n}$. Let $\mathfrak h\subset\mathfrak{sl}_{2n}$
be any Cartan subalgebra containing $\mathfrak h_0$, and set
$T_0:=\exp(\mathfrak h_0)=\T\cap\SO_{2n}=\T\cap\Sp_{2n}$, where
$\T=\exp(\mathfrak h)$. The hypothesis $\operatorname{res}_{\SO_{2n}}\pi_1\simeq\operatorname{res}_{\SO_{2n}}\pi_2$ implies the characters of $\pi_1$ and $\pi_2$ agree on $T_0$, hence their restrictions to $\Sp_{2n}$ also have equal character on the maximal torus $T_0$ of $\Sp_{2n}$.  Therefore the two $\SL_{2n}$--representations restrict isomorphically to $\Sp_{2n}$. Applying the result established for the pair $(\SL_{2n},\Sp_{2n})$ therefore yields that the highest weights of $\pi_1$ and $\pi_2$ coincide up to the nontrivial diagram involution of type $A_{2n-1}$, i.e. either $\pi_1\simeq\pi_2$ or $\pi_1\simeq\pi_2^\vee$, as required.
\end{proof}

\begin{remark}
In the cases of the embeddings
\[
G_2\subset \mathrm{Spin}_7(\mathbb C)
\qquad\text{and}\qquad
G_2\subset \mathrm{SO}_8(\mathbb C),
\]
the preceding arguments prove a much stronger statement. Namely, for
\(\mathfrak{spin}_7\), restriction to a principal
\(\mathfrak{sl}_2\) determines an irreducible representation uniquely,
while for \(\mathfrak{so}_8\), restriction to a principal
\(\mathfrak{sl}_2\) determines an irreducible representation up to an
outer automorphism. Thus, in both cases, the subgroup \(G_2\) is not
needed for the final separation of representations: the principal
\(\mathfrak{sl}_2\) already detects the highest weight, uniquely in type
\(B_3\) and up to triality in type \(D_4\).

However, in case (8) of the embedding \(G_2\subset \mathrm{SL}_7(\mathbb C)\), the situation is different: the restriction to the principal \(\SL_2(\mathbb C)\) does \emph{not} generally determine an irreducible representation of \(\SL_7(\mathbb C)\) up to outer automorphisms. Using Theorem \ref{prod}, one can see that for any simple group \(G\) of rank at most \(4\), an irreducible representation of \(G\) is uniquely determined by its restriction to the principal \(\SL_2(\mathbb C)\) up to an outer automorphism.
\end{remark}

A direct consequence of Theorem~\ref{res_thm} is the following:

\begin{corollary}
For the pairs \((\G,\G_0)\) considered in Theorem~\ref{res_thm}, the
restriction map on isomorphism classes
\[
\operatorname{Irr}(\G)\longrightarrow \operatorname{IrrRep}(\G_0),
\qquad
[V]\longmapsto [\operatorname{res}_{\G_0}^{\G}V],
\]
is injective modulo the subgroup
of automorphisms of \(\G\) that act trivially on \(\G_0\). 
\end{corollary}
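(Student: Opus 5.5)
The corollary is a reformulation of Theorem~\ref{res_thm} in terms of isomorphism classes, so I plan to deduce it directly from the theorem, together with the classification of irreducible representations by highest weight.

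Suppose $V_1,V_2$ are irreducible representations of $\G$ with $\operatorname{res}^{\G}_{\G_0}V_1\simeq\operatorname{res}^{\G}_{\G_0}V_2$. Write $V_1\simeq\pi_\lambda$ and $V_2\simeq\pi_\mu$ for dominant weights $\lambda,\mu\in X^\ast(\T)$ with respect to $\B$. Then $\operatorname{res}_{\G_0}\pi_\lambda\simeq\operatorname{res}_{\G_0}\pi_\mu$, so Theorem~\ref{res_thm} gives $\sigma\in\operatorname{Aut}(\G,\B,\T,\{X_\alpha\})$ with $\sigma(\lambda)=\mu$ and $\sigma(\G_0\cap\T)=\G_0\cap\T$.

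Since $\sigma$ is a pinned automorphism, the twisted representation $\pi_\lambda\circ\sigma^{-1}$ is irreducible, and its highest weight with respect to $\B$ is $\sigma(\lambda)=\mu$. Hence $V_2\simeq V_1^{\sigma}$. So the fibres of the restriction map are contained in orbits of the group of such $\sigma$.

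The point needing care is the phrase ``automorphisms that act trivially on $\G_0$''. The automorphisms arising in the cases are as follows:
\begin{itemize}
\item the diagram involution in the folding cases $\Sp_{2n}\subset\SL_{2n}$, $\SO_{2n+1}\subset\SL_{2n+1}$, $\SO_{2n+1}\subset\SO_{2n+2}$ and $F_4\subset E_6$;
\item the triality group for $G_2\subset\mathrm{Spin}_8$;
\item the duality $\pi\mapsto\pi^\vee$ for $G_2\subset\SL_7$.
\end{itemize}
In every case the relevant $\G_0$ is the fixed-point group of the automorphism (or is contained in it), so $\sigma$ restricts to the identity on $\G_0$. For $G_2\subset\SL_7$ I would check this directly. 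The contragredient of $\pi_\lambda$ is the twist by the inverse-transpose composed with the pinned diagram involution. Because $G_2\subset\SO_7$ preserves a quadratic form, inverse-transpose is conjugation by an element of $\SL_7$ (possibly after scaling the form), and this conjugation fixes $G_2$ pointwise. So $\pi^\vee|_{G_2}\simeq\pi|_{G_2}$, consistent with the theorem.

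Conversely, for an automorphism $\tau$ acting trivially on $\G_0$, we have $\operatorname{res}_{\G_0}V^{\tau}=\operatorname{res}_{\G_0}V$ literally. So the map factors through the quotient by this group, and the argument above shows the induced map is injective. The main obstacle is only the bookkeeping of the second step: matching ``preserving $\G_0\cap\T$'' from Theorem~\ref{res_thm} with ``trivial on $\G_0$''. I would handle it case by case via the fixed-point descriptions listed in Section~\ref{unique-single}.
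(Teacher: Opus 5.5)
Your proposal is correct and follows the paper's route. The paper offers nothing beyond calling the corollary a direct consequence of Theorem~\ref{res_thm}, and your case check supplies the step it leaves implicit: each outer class produced by the theorem contains an automorphism fixing $\G_0$ pointwise. Since inner twists do not change isomorphism classes, it is this representative that restricts to the identity on $\G_0$, not necessarily the pinned $\sigma$ itself. For example, for $G_2\subset\SO_7\subset\SL_7$ the representative is $\mathrm{Ad}(J)^{-1}\circ(g\mapsto {}^tg^{-1})$.

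Your list does not explicitly treat Case~5, $\mathrm{Spin}_7\subset\mathrm{SO}_8$. It is covered by the same principle once one works in $\mathrm{Spin}_8$, as the paper does there and as you already do for $G_2$. The automorphism produced is then a triality conjugate of the $D_4$ involution, whose fixed-point group is the embedded $\mathrm{Spin}_7$.
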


\section{Branching for diagonal embeddings}\label{two-copies}
Let \(H\subset G\) be one of Dynkin's pairs with $H \subset G$ containing a regular unipotent
element of $G$. In this section we consider
the diagonal embedding
\[
H\hookrightarrow G\times G,
\] 
and ask the same uniqueness question: given two pairs of dominant weights
$(\lambda_1,\lambda_2)$ and $(\mu_1,\mu_2)$ of $G$, when does
\[
\operatorname{res}_{H}\!\big(V(\lambda_1)\otimes V(\lambda_2)\big)
\;\simeq\;
\operatorname{res}_{H}\!\big(V(\mu_1)\otimes V(\mu_2)\big)
\]
force $(\lambda_1,\lambda_2)$ and $(\mu_1,\mu_2)$ to coincide up to the
natural symmetries (swapping the two factors and applying an outer
automorphism of $G$ that fixes $H$) ?

For the diagonal pairs $(\mathrm{SO}_{2k}(\mathbb C) \times \mathrm{SO}_{2k}(\mathbb C),\mathrm{SO}_{2k-1}(\mathbb C))$, 
$(E_6 \times E_6,F_4)$ and $(Spin_8(\mathbb C) \times Spin_8(\mathbb C), G_2)$ we prove that the answer is affirmative: the branching
restriction uniquely determines the pair of representations up to the
expected symmetries (Theorems~\ref{so2k_diagonal}, ~\ref{e6-diagonal}, and ~\ref{spin-diagonal}). The proofs rely on an induction on the rank that reduces the problem to a small set of remaining parameters, followed by a comparison of the multisets of exponents coming from the restriction to the principal $\mathrm{SL}_2(\mathbb C)$ (which factors through the diagonal $H$). 

For the remaining diagonal pairs, principal
$\mathrm{SL}_2(\mathbb C)$-data alone is not sufficient: we give explicit counterexamples showing that
the multiset equality of principal $\mathrm{SL}_2(\mathbb C)$ exponents can hold for
two pairs of weights that are neither equal nor swapped, nor related by
the contragredient automorphism.  Thus, for those pairs, a finer analysis
involving the full group $G_0$ is required; for example, see \cite{unique_branch} for a proof for the pair $(\mathrm{SL}_{2k}(\mathbb C) \times \mathrm{SL}_{2k}(\mathbb C),\mathrm{Sp}_{2k}(\mathbb C))$.

\begin{theorem}\label{so2k_diagonal}
Let $G = \mathrm{SO}_{2k}(\mathbb C) \times \mathrm{SO}_{2k}(\mathbb C)$ and let $H \subset G$ be the diagonal embedding of $\mathrm{SO}_{2k-1}(\mathbb C)$. 
Suppose $(\lambda_1, \lambda_2)$ and $(\mu_1, \mu_2)$ are pairs of dominant weights of $\mathrm{SO}_{2k}(\mathbb C)$ such that
\[
\mathrm{res}_{H} \big( V(\lambda_1) \otimes V(\lambda_2) \big) \simeq 
\mathrm{res}_{H} \big( V(\mu_1) \otimes V(\mu_2) \big).
\]
Then, after possibly swapping factors and applying the non‑trivial diagram automorphism of $\mathrm{SO}_{2k}(\mathbb C)$ to one or both factors, we have 
\[
\lambda_1 = \mu_1 \quad \text{and} \quad \lambda_2 = \mu_2.
\]
\end{theorem}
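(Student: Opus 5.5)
Write $n_i=\langle \lambda_1+\rho,\alpha_i^\vee\rangle$, $n_i'=\langle\lambda_2+\rho,\alpha_i^\vee\rangle$, and define $m_i,m_i'$ for $\mu_1,\mu_2$ in the same way. We use the Bourbaki numbering of $D_k$. The folding $\theta$ swaps $\alpha_{k-1}$ and $\alpha_k$, so $p(\alpha_{k-1})=p(\alpha_k)$ is the short simple root of $B_{k-1}$, and each $p(\alpha_i)$ with $i\le k-2$ is a long simple root.

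\textbf{Step 1: the Weyl-numerator identity.} The character of $V(\lambda_1)\otimes V(\lambda_2)$ restricted to $T_0$ is $p(\Theta_{\lambda_1})\,p(\Theta_{\lambda_2})$. The Weyl denominator is common to both sides, so the hypothesis becomes
\[
u_{\lambda_1}u_{\lambda_2}=u_{\mu_1}u_{\mu_2}
\]
in the polynomial ring $\mathbb Z[X_{\beta'}]$ of $B_{k-1}$. This is the diagonal analogue of \cite[Lemma~4.4]{unique_branch}, and it comes from multiplying the Weyl character formula by the $B_{k-1}$ Weyl denominator.

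\textbf{Step 2: the projections.} We apply the projections $\pi_I^0$ exactly as in the proof of Theorem~\ref{res_thm}. Note that $\pi^0_I$ is a ring homomorphism, so it respects products. We also have \cite[Lemma~4.5]{unique_branch}: $\pi_I(U_\lambda)=U^{(I)}_{\lambda^{(I)}-\rho_I}$.

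For $I=\{\alpha_i\}$ with $i\le k-2$, the element $p(\alpha_i)$ is not the image of any other simple root. The projection gives
\[
(1-X^{n_i})(1-X^{n_i'})=(1-X^{m_i})(1-X^{m_i'}),
\]
and hence $\{n_i,n_i'\}=\{m_i,m_i'\}$ by unique factorisation into cyclotomic pieces, or simply by comparing lowest-degree terms.

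For $I=\{\alpha_{k-1},\alpha_k\}$, the projection gives
\[
(1-x^{n_{k-1}})(1-x^{n_k})(1-x^{n_{k-1}'})(1-x^{n_k'})=(\text{same for } m),
\]
and therefore the multiset equality $\{n_{k-1},n_k,n'_{k-1},n'_k\}=\{m_{k-1},m_k,m'_{k-1},m'_k\}$.

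Applying this to connected pairs $I=\{\alpha_i,\alpha_{i+1}\}$ with $i\le k-3$ gives type $A_2$ factors
\[
(1-X^{a})(1-Y^{b})-\ldots,
\]
equivalently the products of $U^{A_2}$ for the two weights. Dividing out as polynomials in two variables should recover which of $n_i,n_i'$ is paired with which of $n_{i+1},n_{i+1}'$. The tool for this is the irreducibility or unique factorisation of $A_2$ Weyl numerators, which one can read off from the monomials $X^aY^{a+b}$ and similar. Chaining these pairings along the path $\alpha_1,\dots,\alpha_{k-2}$, and then through $I=\{\alpha_{k-2},\alpha_{k-1},\alpha_k\}$ (type $A_3$, which folds to $B_2$), should show the following. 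After possibly swapping the two factors, we get $n_i=m_i$ and $n_i'=m_i'$ for all $i\le k-2$. We also get $\{n_{k-1},n_k\}=\{m_{k-1},m_k\}$, and the same for the primed values.

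\textbf{Step 3: the main obstacle, coupling the branch nodes.} An alternative to Step 2 is to run the induction of Theorem~\ref{res_thm} with $\Delta_1=\Delta\setminus\{\alpha_1\}$. By the inductive hypothesis for $D_{k-1}$, this determines the tails up to swap and $\theta$, and the single node $\alpha_1$ is handled by its own factor; the base case is $k=3$, which reduces to $(\SL_4\times\SL_4,\Sp_4)$ \cite{unique_branch}. Either route leaves the same issue. The swap chosen on the tail and the swap chosen at $\alpha_1$ must be compatible, and $\theta$ may have to be applied to one factor independently of the other. These ambiguities are genuinely independent, which is why the statement allows $\theta$ on one or both factors.

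The gluing is where I expect the difficulty. I would handle it in two stages. First, use the $A_2$-type projection at $\{\alpha_1,\alpha_2\}$ to link $\alpha_1$ to the tail. If $\{n_1,n_1'\}$ has distinct entries, or $\{n_2,n_2'\}$ does, the pairing is forced. Second, in the degenerate equal cases, fall back on the principal $\SL_2$ multiset equality from Theorem~\ref{prod} applied to both factors, that is,
\[
\{\langle\lambda_j+\rho,\alpha^\vee\rangle\}_{j,\alpha}=\{\langle\mu_j+\rho,\alpha^\vee\rangle\}_{j,\alpha},
\]
and compare the largest elements, $n_1+2(n_2+\dots+n_{k-2})+n_{k-1}+n_k$ and its primed analogue, to fix the pairing. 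Because $\theta$ can be applied separately on each factor, no coupling is needed between the choices at $\alpha_{k-1},\alpha_k$ for the two factors. This yields $\lambda_j=\mu_j$ up to swapping the factors and applying $\theta$ factorwise.
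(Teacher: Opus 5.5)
\textbf{Gap in the gluing step at $\alpha_1$.}
Your skeleton matches the paper's: reduce to $u_{\lambda_1}u_{\lambda_2}=u_{\mu_1}u_{\mu_2}$, induct on $k$ by deleting $\alpha_1$ with base case $(\mathrm{SL}_4\times\mathrm{SL}_4,\mathrm{Sp}_4)$, then glue at $\alpha_1$ using principal $\mathrm{SL}_2$ data. You also correctly locate the difficulty in the gluing. However, the gluing as you propose it does not work.

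After the induction and the single-node projection at $\alpha_1$, the only nontrivial configuration is $\lambda_1=(n_1;t)$, $\lambda_2=(n_1';t')$, $\mu_1=(n_1';t)$, $\mu_2=(n_1;t')$ with $n_1\neq n_1'$. Here $t=(n_2,\dots,n_k)$ and $t'=(n_2',\dots,n_k')$, and you must show $t'\in\{t,\theta t\}$.

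Write $U_{(a,b)}$ for the $A_2$ numerator with shifted coordinates $(a,b)$. The $A_2$ projection at $\{\alpha_1,\alpha_2\}$ decides the question only when $n_2\neq n_2'$. If $n_2=n_2'$, the crossed and uncrossed products are literally the same polynomial $U_{(n_1,n_2)}U_{(n_1',n_2)}$. So your claim that the pairing is forced when $n_1\neq n_1'$ \emph{or} $n_2\neq n_2'$ is false.

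In that case your fallback, comparing largest elements, only gives $\max(n_1+c(t),n_1'+c(t'))=\max(n_1'+c(t),n_1+c(t'))$. This amounts to $c(t)=c(t')$ with $c(t)=2(n_2+\cdots+n_{k-2})+n_{k-1}+n_k$, a single linear relation. For instance, take $k=5$, $t=(1,1,2,2)$ and $t'=(1,2,1,1)$. These have the same $n_2$ and the same $c$, so for any $n_1\neq n_1'$ the resulting pairs pass every test you list: single nodes, the $A_2$ pieces, the folded $A_3$ piece, the induction on tails, and the top exponent. Yet they are not related by swapping and $\theta$, so your argument does not conclude.

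A minor point: $A_2$ numerators are not irreducible, e.g.\ $U_{(a,a)}=(1-X^a)(1-Y^a)(1-X^aY^a)$. Your non-degenerate pairing statement is true, but it should be proved by comparing lowest mixed monomials, using $U_{(a,b)}=(1-X^a)(1-Y^b)-X^aY^b(1-X^b)(1-Y^a)$.

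\textbf{The missing idea.}
You need the \emph{whole} part of \eqref{principal_numbers} coming from roots that contain $\alpha_1$; this is what the paper's chain-of-minima comparison exploits.

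Roots without $\alpha_1$ contribute identically to both sides, since they see only the tails and $\theta$ permutes them. Every other positive root of $D_k$ contains $\alpha_1$ with coefficient $1$. It contributes $n_1+s$, with $s$ running over a multiset $S(t)$ that depends only on the tail. Writing $F_t(x)=\sum_{s\in S(t)}x^s$, the multiset identity becomes $(x^{n_1}-x^{n_1'})(F_t-F_{t'})=0$, hence $S(t)=S(t')$.

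Put $T=n_2+\cdots+n_{k-2}$. Then the sorted multiset $S(t)$ is $0,\ n_2,\ n_2+n_3,\dots,T,\ T+\min(n_{k-1},n_k),\ T+\max(n_{k-1},n_k),\ T+n_{k-1}+n_k,\ T+n_{k-1}+n_k+n_{k-2},\dots,2T+n_{k-1}+n_k$. Its consecutive gaps return $n_2,\dots,n_{k-2}$ and $\{n_{k-1},n_k\}$, so $t'=t$ or $t'=\theta t$. In the example above, $S(t)=\{0,1,2,4,4,6,7,8\}$ while $S(t')=\{0,1,3,4,4,5,7,8\}$, so that pair is correctly excluded.
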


\begin{proof}
Write
\[
n_i=\langle\lambda_1+\rho,\alpha_i^\vee\rangle,\qquad
n_i'=\langle\lambda_2+\rho,\alpha_i^\vee\rangle,
\]
and similarly
\[
m_i=\langle\mu_1+\rho,\alpha_i^\vee\rangle,\qquad
m_i'=\langle\mu_2+\rho,\alpha_i^\vee\rangle.
\]
Now we decompose the simple roots of $\mathfrak{g}\times\mathfrak{g}$ into 
$\Delta_1$ (all simple roots except the two copies of $\alpha_1$) 
and $\Delta_2$ (the two copies of $\alpha_1$). 

Arguing exactly as in the proof of Theorem~\ref{res_thm}, more precisely
as in the step leading to equation \ref{tail}, we apply the same restriction
argument to the subsystem
$\Delta_1=\Delta\setminus\{\alpha_1\}$
in each of the two factors. The corresponding smaller diagonal pair is
again of the same form, so the induction hypothesis applies. Therefore,
after possibly interchanging the two tensor factors and applying the
diagram automorphism of \(D_k\) to one or both factors, we may assume
\[
n_i=m_i,\qquad n_i'=m_i'
\qquad (2\le i\le k).
\]

Since the principal $\mathrm{SL}_2(\mathbb C)$ factors through $H$, the
hypothesis $\operatorname{res}_{H}(V(\lambda_1)\otimes V(\lambda_2))
\simeq \operatorname{res}_{H}(V(\mu_1)\otimes V(\mu_2))$ implies an
equality of the multisets of exponents for the tensor product, exactly as
in equation ~\ref{principal_numbers}.  Using the equalities $n_i=m_i$ and $n_i'=m_i'$
for all $i\ge 2$ obtained from the induction step, these two multisets
differ only in the entries that involve $n_1,n_1'$ versus $m_1,m_1'$.
Comparing the two multisets therefore forces
$\{n_1,n_1'\} = \{m_1,m_1'\}$.

If \(n_1=m_1\), we are done. If \(n_1=m_1'\), then \(n_1'=m_1\). Comparing
the smallest terms involving the adjacent root \(\alpha_2\) gives
\[
\min\{n_1+n_2,n_1'+n_2'\}
=
\min\{n_1'+n_2,n_1+n_2'\}.
\]
Since \(n_1\neq n_1'\), this forces \(n_2=n_2'\). Repeating the same
argument along the chain gives \(n_i=n_i'\) for all \(i\ge2\). Hence the
two weights are equal after swapping the tensor factors.
\end{proof}

\begin{theorem}\label{e6-diagonal}
Let $G = E_6 \times E_6$ and let $H \subset G$ be the diagonally embedded $F_4$. 
Suppose $(\lambda_1, \lambda_2)$ and $(\mu_1, \mu_2)$ are pairs of dominant weights of $E_6$ such that
\[
\mathrm{res}_{H} (V(\lambda_1) \otimes V(\lambda_2)) \simeq \mathrm{res}_{H} (V(\mu_1) \otimes V(\mu_2)).
\]
Then, up to possibly swapping the two factors and applying the non-trivial outer automorphism of $E_6$ fixing $F_4$, we have
\[
(\lambda_1, \lambda_2) = (\mu_1, \mu_2) \quad \text{or} \quad (\lambda_1, \lambda_2) = (\mu_2, \mu_1).
\]
\end{theorem}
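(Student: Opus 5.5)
We follow the template of Theorem~\ref{so2k_diagonal} and Case~6 of Theorem~\ref{res_thm}. Write
\[
n_i=\langle\lambda_1+\rho,\alpha_i^\vee\rangle,\quad n_i'=\langle\lambda_2+\rho,\alpha_i^\vee\rangle,\quad m_i=\langle\mu_1+\rho,\alpha_i^\vee\rangle,\quad m_i'=\langle\mu_2+\rho,\alpha_i^\vee\rangle
\]
in Bourbaki numbering. By \cite[Section~4.1, Lemma~4.4]{unique_branch}, applied to $G\times G\supset H$, the hypothesis is equivalent to $p(U_{\lambda_1}U_{\lambda_2})=p(U_{\mu_1}U_{\mu_2})$ in $\mathbb Z[X_{\beta'}]$. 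Here $p$ sends both copies of each simple root of $E_6$ to the same simple root of $F_4$.

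\textbf{Step 1: reduce to the $D_5$ subsystem.} Remove $\alpha_2$ and apply $\pi^0_I$ with $I=\Delta\setminus\{\alpha_2\}$ in each factor. Since $p(\alpha_2)$ is a simple root of $F_4$ not in $p(I)$, compatibility (2) holds. Combined with \cite[Lemma~4.5]{unique_branch}, the equality descends to the diagonal pair $(D_5\times D_5,B_4)$. By Theorem~\ref{so2k_diagonal} (with $k=5$, applied to the shifted weights $\lambda^{(1)}-\rho_1$ etc., whose $\rho$-shift is preserved by diagram automorphisms), we may assume the following after swapping factors and applying the $E_6$ involution to one or both factors:
\[
(n_1,n_3,n_4,n_5,n_6)=(m_1,m_3,m_4,m_5,m_6),\qquad (n_1',\dots,n_6')=(m_1',\dots,m_6') \text{ off } \alpha_2 .
\]
The $D_5$ diagram automorphism is the restriction of the $E_6$ involution, which fixes $\alpha_2$, so this normalization is legitimate. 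Similarly, projecting onto $\{\alpha_2\}$ in each factor gives the $A_1$ data: setting all variables except $X_{p(\alpha_2)}$ to zero yields $(1-X^{n_2})(1-X^{n_2'})=(1-X^{m_2})(1-X^{m_2'})$, hence $\{n_2,n_2'\}=\{m_2,m_2'\}$.

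\textbf{Step 2: resolve the remaining ambiguity.} If $n_2=m_2$ we are done. Otherwise $n_2=m_2'\neq n_2'=m_2$, and we must show this forces $\lambda_1,\lambda_2$ to agree off $\alpha_2$, so that the pair is the swapped one. Here we use the principal $\SL_2$ multiset equality (Theorem~\ref{prod}, as in \eqref{principal_numbers}) for the tensor product, namely
\[
\{\langle\lambda_1+\rho,\alpha^\vee\rangle\}\sqcup\{\langle\lambda_2+\rho,\alpha^\vee\rangle\}=\{\langle\mu_1+\rho,\alpha^\vee\rangle\}\sqcup\{\langle\mu_2+\rho,\alpha^\vee\rangle\},
\]
and compare the smallest entries involving $\alpha_2$. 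The roots containing $\alpha_2$ with small height are $\alpha_2$, $\alpha_2+\alpha_4$, $\alpha_2+\alpha_4+\alpha_3$, $\alpha_2+\alpha_4+\alpha_5$, and so on. Cancelling the common entries not involving $\alpha_2$, we get
\[
\min\{n_2+n_4,\,n_2'+n_4'\}=\min\{n_2'+n_4,\,n_2+n_4'\},
\]
which forces $n_4=n_4'$ since $n_2\ne n_2'$. Walking outward along the diagram, using the roots $\alpha_2+\alpha_4+\alpha_3$, $\alpha_2+\alpha_4+\alpha_5$, then those adding $\alpha_1,\alpha_6$, yields $n_i=n_i'$ for all $i\neq2$, exactly as in Case~1 and the proof of Theorem~\ref{so2k_diagonal}. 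Then $(\lambda_1,\lambda_2)=(\mu_2,\mu_1)$.

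\textbf{Main obstacle.} The main obstacle is Step~2. Unlike type $A$, $E_6$ has many roots of each height, and at the branch nodes $\alpha_3$ and $\alpha_5$ (symmetric under the involution) a single min comparison may only give $\{n_3,n_5\}=\{n_3',n_5'\}$. I would first do an inductive peeling argument: at each stage remove all entries already known to match, and compare minima of the residual multisets restricted to roots of a given support. Where symmetric ambiguity remains, I would use additional linear invariants: the total sums of the multisets, and sums over roots weighted by coefficient of $\alpha_2$, obtained from derivatives of the product formula. These resolve the involution choice, possibly at the cost of applying the outer automorphism of $E_6$, which the statement allows. If the principal data is insufficient, as the remark on $\SL_7$ warns it can be, the fallback is the full $F_4$ projections of Step~1 applied to other subsets $I$, for instance removing $\alpha_1,\alpha_6$ to reach a diagonal $(D_4\times D_4, G_2)$-type or $(A_5\times A_5,C_3)$-type situation.
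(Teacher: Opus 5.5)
Your overall architecture matches the paper's: delete $\alpha_2$ in both factors, apply the smaller diagonal result together with $\{n_2,n_2'\}=\{m_2,m_2'\}$, then settle the crossed case with principal $\mathrm{SL}_2$ exponents. However, Step~1 rests on a misidentified subsystem.

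\textbf{Step 1 uses the wrong subsystem.} In Bourbaki numbering, deleting $\alpha_2$ from $E_6$ leaves the chain $\alpha_1,\alpha_3,\alpha_4,\alpha_5,\alpha_6$, which is of type $A_5$, not $D_5$. The involution $\theta$ reverses this chain, and on the $F_4$ side removing $p(\alpha_2)$ leaves a $C_3$. So the projected identity is the diagonal problem for $(A_5\times A_5,C_3)$, i.e.\ $(\mathrm{SL}_6\times\mathrm{SL}_6,\mathrm{Sp}_6)$, and Theorem~\ref{so2k_diagonal} with $k=5$ does not apply. A $D_5$ reduction is not available by any choice of subset. The $D_5$ Levis come from deleting $\alpha_1$ or $\alpha_6$, which $\theta$ interchanges, so $p(I)\cap p(\Delta\setminus I)\neq\varnothing$ and the compatibility (2) fails. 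The paper deletes $\alpha_2$, obtains $\mathfrak a_5\times\mathfrak a_5$, and invokes diagonal uniqueness for that pair, i.e.\ the $\mathrm{SL}_{2k}\times\mathrm{SL}_{2k}\supset\mathrm{Sp}_{2k}$ result it attributes to \cite{unique_branch}. That is the input your Step~1 actually needs.

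\textbf{Step 2 aims at a false intermediate claim.} Write $a,b$ for the off-$\alpha_2$ coordinates of $\lambda_1,\lambda_2$ after normalization. In the crossed case the correct target is $b\in\{a,\theta a\}$, and $b=\theta a\neq a$ genuinely occurs. For example, take $a\neq\theta a$, $n_2\neq n_2'$, $\lambda_2$ with off-$\alpha_2$ part $\theta a$, and $\mu_1=\theta\lambda_2$, $\mu_2=\theta\lambda_1$. So no walk along the diagram can yield $n_i=n_i'$ for all $i\neq 2$. Also, as you note, the min comparisons stall at $\alpha_3,\alpha_5$; only $n_4=n_4'$ comes out cleanly.

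\textbf{One way to close the crossed case.} After cancelling the $A_5$ roots, the principal identity reads $(t^{n_2}-t^{n_2'})(A-B)+(t^{2n_2}-t^{2n_2'})(t^{h(a)}-t^{h(b)})=0$. Here $A,B$ are the generating functions over the $20$ roots with $\alpha_2$-coefficient $1$, and $h$ is the off-$\alpha_2$ part of the highest-root exponent. Those $20$ roots have coefficient sum $10\cdot(1,2,3,2,1)$ on $\alpha_1,\alpha_3,\alpha_4,\alpha_5,\alpha_6$. Dividing by $t^{n_2}-t^{n_2'}$ and differentiating at $t=1$ therefore gives $12(h(a)-h(b))=0$, hence $A=B$.

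Your fallback then works, but with the correct folded pair. The set $\{\alpha_2,\alpha_3,\alpha_4,\alpha_5\}$ is a $\theta$-stable $D_4$ whose fixed part is $B_3$, not $G_2$. Projecting onto it and applying Theorem~\ref{so2k_diagonal} with $k=4$ gives $n_4=n_4'$ and $\{n_3,n_5\}=\{n_3',n_5'\}$. Replace $b$ by $\theta b$ if needed; both $h$ and $A$ are $\theta$-invariant. Now restrict $A=B$ to the eight roots containing exactly one of $\alpha_1,\alpha_6$ and use $n_1+n_6=n_1'+n_6'$. This forces $b=a$, or else $n_3=n_5$ and $b=\theta a$.

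The paper's own treatment of the crossed case is a one-sentence assertion, so this is the step that needs real work.
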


\begin{proof}
We regard $V(\lambda_1)\otimes V(\lambda_2)$ as the irreducible representation
of $\G$ with highest weight $(\lambda_1,\lambda_2)$, and similarly for
$(\mu_1,\mu_2)$.  The subgroup $H$ contains a
regular unipotent element of $\G$; therefore the hypothesis
\[
\operatorname{res}_{H}\!\big(V(\lambda_1)\otimes V(\lambda_2)\big)
\simeq
\operatorname{res}_{H}\!\big(V(\mu_1)\otimes V(\mu_2)\big)
\]
allows us to apply the restriction argument of
Theorem~\ref{res_thm} (more precisely, equation \ref{tail}) to the pair $(\G,H)$.

We write the simple roots of $\G$ as two copies
$\{\alpha_i^{(1)}\}$ and $\{\alpha_i^{(2)}\}$ ($i=1,\dots,6$) of the
simple roots of $E_6$ and we decompose the set of simple roots as
\[
\Delta_1 = \{\text{all simple roots except } \alpha_2^{(1)},\alpha_2^{(2)}\},
\qquad
\Delta_2 = \{\alpha_2^{(1)},\alpha_2^{(2)}\}.
\]
The subalgebra corresponding to $\Delta_1$ is $\mathfrak{a}_5\times\mathfrak{a}_5$,
the product of two copies of the $A_5$ Levi subalgebra of $E_6$ (obtained by
omitting $\alpha_2$).  Let $\lambda_1^{(1)},\lambda_2^{(1)}$ denote the highest
weights of the restrictions of $V(\lambda_1)$ and $V(\lambda_2)$ to this
$A_5$ subalgebra, expressed in terms of the fundamental weights of $A_5$ and we define $\mu_1^{(1)},\mu_2^{(1)}$ analogously. Write \[n_i=\langle\lambda_1+\rho,\alpha_i^\vee\rangle,\qquad
n_i'=\langle\lambda_2+\rho,\alpha_i^\vee\rangle,
\]
and similarly \(m_i,m_i'\).

Arguing as in the proof of Theorem~\ref{res_thm}, specifically in the
step leading to \eqref{tail}, we apply the same restriction argument to
the decomposition
$
\Delta=\Delta_1\sqcup\Delta_2.$ Since the subsystem corresponding to \(\Delta_1\) is of type
\(A_5\times A_5\), the induction hypothesis gives, after the allowed
symmetries,
\[
n_i=m_i,\qquad n_i'=m_i'
\qquad (i\neq 2).
\]
It remains to compare the two remaining coordinates. Applying the same
argument to the subsystem corresponding to \(\Delta_2\) (the corresponding root subsystem is of type \(A_1\times A_1\)), we get
\[
(1-z^{n_2})(1-z^{n_2'})
=
(1-z^{m_2})(1-z^{m_2'}),
\]
and hence
\[
\{n_2,n_2'\}=\{m_2,m_2'\}.
\]
If the matching is factorwise, then all shifted coordinates agree and we
are done. If the matching is crossed, say \(n_2=m_2'\) and
\(n_2'=m_2\), then either \(n_2=n_2'\), in which case there is nothing to
prove, or \(n_2\neq n_2'\).

In the latter case, we use the equality of the principal
\(\mathrm{SL}_2(\mathbb C)\)-exponent multisets in equation \ref{principal_numbers}. Comparing the exponents
which contain the coordinate \(n_2\), and using the already established
equalities \(n_i=m_i\), \(n_i'=m_i'\) for \(i\neq2\), forces the remaining
coordinates in the two factors to agree. Thus, after interchanging the two
tensor factors, we are reduced to the factorwise matching case.
Therefore
\[
(\lambda_1,\lambda_2)=(\mu_1,\mu_2)
\]
up to swapping the two factors and applying the  outer
automorphism of \(E_6\).
\end{proof}

\begin{theorem}\label{spin-diagonal}
Let $G = Spin_8(\mathbb C) \times Spin_8(\mathbb C)$ and let $H \subset G$ be the diagonally embedded $G_2$. 
Suppose $(\lambda_1, \lambda_2)$ and $(\mu_1, \mu_2)$ are pairs of dominant weights of $Spin_8(\mathbb C)$ such that
\[
\mathrm{res}_{H} V(\lambda_1) \otimes V(\lambda_2) \simeq \mathrm{res}_{H} V(\mu_1) \otimes V(\mu_2).
\] Then \((\lambda_1,\lambda_2)\) and \((\mu_1,\mu_2)\) agree up to swapping
the two factors and applying triality automorphisms.
\end{theorem}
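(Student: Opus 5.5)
We follow the pattern of Theorems~\ref{so2k_diagonal} and~\ref{e6-diagonal}. Write
\[
n_i=\langle\lambda_1+\rho,\alpha_i^\vee\rangle,\qquad n_i'=\langle\lambda_2+\rho,\alpha_i^\vee\rangle,
\]
and similarly $m_i,m_i'$, for $1\le i\le4$ in Bourbaki numbering of $D_4$. By \cite[Lemma~4.4]{unique_branch}, the hypothesis is equivalent to $u_{(\lambda_1,\lambda_2)}=u_{(\mu_1,\mu_2)}$. Here $p$ sends $\alpha_1^{(j)},\alpha_3^{(j)},\alpha_4^{(j)}$ to the short simple root $\beta_1$ of $G_2$ and $\alpha_2^{(j)}$ to the long root $\beta_2$, for $j=1,2$.

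\textbf{Step 1 (tails).} Set $x=e^{-\beta_1}$ and $y=e^{-\beta_2}$. Specialising $x=0$ isolates the $A_1\times A_1$ subsystem $\{\alpha_2^{(1)},\alpha_2^{(2)}\}$. By \cite[Lemma~4.5]{unique_branch} this gives
\[
(1-y^{n_2})(1-y^{n_2'})=(1-y^{m_2})(1-y^{m_2'}),
\]
hence $\{n_2,n_2'\}=\{m_2,m_2'\}$. Specialising $y=0$ isolates six orthogonal copies of $A_1$ and gives the multiset equality
\[
\{n_1,n_3,n_4,n_1',n_3',n_4'\}=\{m_1,m_3,m_4,m_1',m_3',m_4'\}.
\]

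\textbf{Step 2 (principal $\SL_2$).} Since the principal $\SL_2(\mathbb C)$ of $G$ factors through the diagonal $H$, Theorem~\ref{prod} gives, as in \eqref{principal_numbers}, equality of the union of the two $D_4$ exponent multisets. These are the 24 numbers listed in Case~7 of Theorem~\ref{res_thm}, once for the $n$'s and once for the $n'$'s. Using triality independently on each factor, we normalise $n_1\le n_3\le n_4$, $n_1'\le n_3'\le n_4'$, and likewise for the $m$'s. Swapping factors if needed, we also assume $n_2\le n_2'$ and $m_2\le m_2'$, so that $n_2=m_2$ and $n_2'=m_2'$ by Step~1.

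\textbf{Step 3 (matching).} The two largest exponents in each factor are $n_1+2n_2+n_3+n_4$ and $n_1+n_2+n_3+n_4$. Their difference is $n_2$, and their sum is $S_1+2n_2+S_1+n_2$ with $S_1=n_1+n_3+n_4$. In the union, the global maximum is the larger of the two top exponents, and it equals the corresponding $m$-maximum. We then peel off elements from the top, using the known pair $\{n_2,n_2'\}$, to recover $S_1=M_1$ and $S_1'=M_1'$, or the crossed assignment. The crossed assignment is removed by swapping factors together with a $\min$ argument like the one in Theorem~\ref{so2k_diagonal}.

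With $S_1$, $S_1'$, $n_2$, $n_2'$ now matched factorwise, we work from the bottom. Delete the six singletons already accounted for in Step~1. The next smallest entries are of the form $n_1+n_2$, $n_1'+n_2'$, etc. These determine $n_1$ and $n_1'$ because $n_2$ and $n_2'$ are known, then $n_3$ and $n_3'$, and finally $n_4=S_1-n_1-n_3$. Each comparison takes a minimum of the remaining elements on both sides, as in Case~7.

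\textbf{Main obstacle.} The hard step is Step~3. When the two factors are interleaved, for example $n_2<n_2'$ but $S_1>S_1'$, the ``largest'' and ``smallest'' remaining elements can come from either factor, so each $\min$/$\max$ comparison splits into cases. I would first try to control this with two invariants: the sum of all 24 exponents, which equals $10(n_2+n_2')+6(S_1+S_1')$, and the sum of squares, which gives a second symmetric relation. Combined with Step~1, these should pin down $\{(n_2,S_1),(n_2',S_1')\}$ as an unordered pair. After that, the bottom-up peeling is routine.

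If residual ambiguity remains among $\{n_1,n_3,n_4\}$ within a factor, it is exactly the triality permutation, which is allowed by the statement.
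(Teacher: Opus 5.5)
\textbf{Gap in Step 3.} Your Steps 1 and 2 are the same as in the paper: the specialisations $x=0$ and $y=0$ give $\{n_2,n_2'\}=\{m_2,m_2'\}$ and equality of the six outer coordinates as multisets, and the principal $\mathrm{SL}_2$ gives equality of the 24-element exponent multisets. Step 3, where all the content lies, is not a proof, and your specific claims there are wrong.

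Take $n_2=m_2<n_2'=m_2'$ and put $e=n_2'-n_2$.
\begin{itemize}
\item Comparing maxima does \emph{not} give ``$S_1=M_1$ or the crossed assignment''. The crossed assignment $M_1=S_1'$ is already excluded by the maximum when $S_1\neq S_1'$. The alternative that actually survives is $M_1+2n_2=S_1'+2n_2'$ and $M_1'+2n_2'=S_1+2n_2$, i.e.\ $M_1=S_1'+2e$, $M_1'=S_1-2e$.
\item You cannot remove this by swapping factors, because you already used the swap to align $n_2$ with $m_2$.
\item Your invariants do not remove it either. The total $10(n_2+n_2')+6(S_1+S_1')$ is already known from Step 1. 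The sum of squares is $3Q+3S^2+12n_2S+12n_2^2$ per factor, where $Q$ is the sum of squares of the outer coordinates. It fixes $S_1$ only as a root of a quadratic whose other root is exactly $S_1'+2e$.
\item The bottom-up peeling has the same defect. The smallest remaining entry $\min(n_1+n_2,\,n_1'+n_2')$ may equal $m_1'+n_2'$ rather than $m_1+n_2$. Ordering the outer coordinates within each factor does nothing to prevent interleaving across factors.
\end{itemize}
The paper's own reduction to pairwise sums $\Sigma_2$ is also only immediate when $n_2=n_2'$. Otherwise the terms $n_i+n_2$ and the two top exponents are not fixed by the two tail identities.

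\textbf{The missing idea.} Your ingredients suffice, but you need an organising identity rather than peeling. Put $A=\{n_1,n_3,n_4\}$, $b=n_2$, $S_A=n_1+n_3+n_4$, $f_A(t)=\sum_{a\in A}t^a$ and $\Pi_A(t)=\prod_{a\in A}(1+t^a)$. The list of positive roots of $D_4$ gives
\[
\sum_{\alpha\in\Phi^+}t^{\langle\lambda_1+\rho,\alpha^\vee\rangle}=f_A(t)+t^{b}\,\Pi_A(t)+t^{S_A+2b}.
\]
Let $(A',b')$, $(C,b)$, $(C',b')$ be the corresponding data for $\lambda_2,\mu_1,\mu_2$.
\begin{enumerate}
\item The $f$'s cancel by Step 1.
\item On each side the top degree is attained only by the two terms $t^{S+2b}$ and $t^{S'+2b'}$. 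Since $S_A+S_{A'}=S_C+S_{C'}$, the two exponent pairs have the same maximum and the same sum, so they cancel.
\item What remains is $t^b\Pi_A+t^{b'}\Pi_{A'}=t^b\Pi_C+t^{b'}\Pi_{C'}$.
\item Write $A=X\sqcup P$ and $C=X\sqcup Q$ with $P\cap Q=\varnothing$. Step 1 then forces $A'=X'\sqcup Q$ and $C'=X'\sqcup P$, and the identity factors as
\[
(\Pi_P-\Pi_Q)\,(t^{b}\Pi_X-t^{b'}\Pi_{X'})=0 .
\]
\item Compare lowest nonconstant terms. If $\Pi_P=\Pi_Q$, then $P=Q=\varnothing$, so $C=A$ and $C'=A'$. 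If $t^b\Pi_X=t^{b'}\Pi_{X'}$, then $b=b'$ and $X=X'$, so $C=A'$ and $C'=A$ with equal central coordinates, which is a swap.
\end{enumerate}
In both cases the outer multisets agree factorwise, which is exactly agreement up to triality on each factor. No ordering normalisation or case-by-case peeling is needed.
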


\begin{proof}
Use Bourbaki numbering for \(D_4\), with \(\alpha_2\) the central root and
\(\alpha_1,\alpha_3,\alpha_4\) the outer roots. Write
\[
n_i=\langle \lambda_1+\rho,\alpha_i^\vee\rangle,\qquad
n_i'=\langle \lambda_2+\rho,\alpha_i^\vee\rangle,
\]
and similarly \(m_i,m_i'\) for \(\mu_1,\mu_2\).

Let \(p\) be the restriction map to the \(G_2\)-root lattice. Under
triality folding,
\[
p(\alpha_1)=p(\alpha_3)=p(\alpha_4)=\beta_1,\qquad
p(\alpha_2)=\beta_2.
\]
Set \(x=e^{-\beta_1}\), \(y=e^{-\beta_2}\). The hypothesis gives
\[
p(U_{\lambda_1})p(U_{\lambda_2})
=
p(U_{\mu_1})p(U_{\mu_2}).
\tag{1}
\]

Setting \(x=0\) in (1) gives
\[
(1-y^{n_2})(1-y^{n_2'})
=
(1-y^{m_2})(1-y^{m_2'}),
\]
hence $
\{n_2,n_2'\}=\{m_2,m_2'\}.$
After swapping the two tensor factors if necessary, assume
\[
n_2=m_2,\qquad n_2'=m_2'.
\tag{2}
\]

Next, setting \(y=0\) in (1) gives
\[
\prod_{i\in\{1,3,4\}}(1-x^{n_i})
\prod_{i\in\{1,3,4\}}(1-x^{n_i'})
=
\prod_{i\in\{1,3,4\}}(1-x^{m_i})
\prod_{i\in\{1,3,4\}}(1-x^{m_i'}).
\]
Therefore
\[
\{n_1,n_3,n_4,n_1',n_3',n_4'\}
=
\{m_1,m_3,m_4,m_1',m_3',m_4'\}.
\tag{3}
\]
Since the principal homomorphism 
\(\mathrm{SL}_2(\mathbb{C}) \to \G\) factors through $H$,  
the equality of multisets \eqref{principal_numbers} holds for $(\lambda_1,\lambda_2)$ and $(\mu_1,\mu_2)$ and we get \[
\mathcal P(\lambda_1)\sqcup\mathcal P(\lambda_2)
=
\mathcal P(\mu_1)\sqcup\mathcal P(\mu_2),
\tag{4}
\] where 
\[
\begin{aligned}
\mathcal P(\lambda_1)=
\{&
n_1,n_2,n_3,n_4,\,
n_1+n_2,n_2+n_3,n_2+n_4,\\
&
n_1+n_2+n_3,\,
n_1+n_2+n_4,\,
n_2+n_3+n_4,\\
&
n_1+n_2+n_3+n_4,\,
n_1+2n_2+n_3+n_4
\}.
\end{aligned}
\] and similarly for other weights. Using (2) and (3), all terms in (4) except the triple sums
$n_i+n_j+n_2$ (equivalently, the pairwise sums $n_i+n_j$ of the outer coordinates, up to the known shift by $n_2$) are already determined. So we get \[
\Sigma_2(\{n_1,n_3,n_4\})\sqcup \Sigma_2(\{n_1',n_3',n_4'\})
=\Sigma_2(\{m_1,m_3,m_4\})\sqcup \Sigma_2(\{m_1',m_3',m_4'\}),
\]
where $\Sigma_2(\{r,s,t\})=\{r+s,\ r+t,\ s+t\}.$
A multiset of three numbers is determined by its pairwise sums. Hence
\[
\bigl\{\{n_1,n_3,n_4\},\{n_1',n_3',n_4'\}\bigr\}
=
\bigl\{\{m_1,m_3,m_4\},\{m_1',m_3',m_4'\}\bigr\}.
\]

Together with (3), this shows that the two pairs of shifted weights agree
up to interchanging the two factors and permuting the three outer
\(D_4\)-nodes. These permutations are precisely the triality
automorphisms. Therefore
$
(\lambda_1,\lambda_2)$
and
$
(\mu_1,\mu_2)$
agree up to the claimed symmetries.
\end{proof}

\subsection{Insufficiency of the principal 
\texorpdfstring{${\rm SL}_2(\mathbb{C})$}{} alone}

For diagonal branching problems, the restriction to the principal
${\rm SL}_2(\mathbb{C})$ is a useful necessary test, but it is not
sufficient in general to determine the pair of highest weights up to the
expected symmetries. We illustrate this phenomenon by the following two explicit type
\(A\) examples: 
\[
\Delta SO_5(\mathbb C)\subset
\mathrm{SL}_5(\mathbb C)\times \mathrm{SL}_5(\mathbb C), \,\,\,\, 
\Delta Sp_6(\mathbb C)\subset
\mathrm{SL}_6(\mathbb C)\times \mathrm{SL}_6(\mathbb C).
\]
Similar failures occur for the other diagonal pairs not treated by our
positive results, including the non-folding \(G_2\)-embeddings. 

For a tuple \(\mathbf a=(a_1,\dots,a_k)\) of positive integers, define the
multiset
\[
N(\mathbf a)
=
\{
a_1,\,
a_1+a_2,\,
\dots,\,
a_1+\cdots+a_k,\,
a_1+\cdots+a_k,\,
a_2+\cdots+a_k,\,
\dots,\,
a_k
\}.
\]
Thus \(N(\mathbf a)\) records the principal
\(\mathrm{SL}_2(\mathbb C)\)-exponents occurring in the reduced type
\(A\) situation.

\begin{enumerate}
\item
\textbf{The case of \(\mathrm{SL}_5(\mathbb C)\).}
Let $\mathbf n=(1,1,2,3),
\mathbf m=(2,1,3,1),
\mathbf n'=(3,1,2,1),
\mathbf m'=(1,1,3,2).$
A direct computation gives
\[
N(\mathbf n)\sqcup N(\mathbf m)
=
\{1,1,\;2,2,\;3,3,\;4,4,\;5,5,\;6,6,\;7,7,7,7\}
=
N(\mathbf n')\sqcup N(\mathbf m').
\]
However, the pair \((\mathbf n,\mathbf m)\) cannot be obtained from
\((\mathbf n',\mathbf m')\) by interchanging the two tuples or by
reversing the individual tuples.

\item
\textbf{The case of \(\mathrm{SL}_6(\mathbb C)\).}
Let $
\mathbf n=(1,1,1,2,3),
\mathbf m=(2,1,1,3,1),
\mathbf n'=(1,1,1,3,2),
\mathbf m'=(3,1,1,2,1).$
Then
\[
\begin{aligned}
N(\mathbf n)\sqcup N(\mathbf m)
=
\{&
1,1,\;
2,2,\;
3,3,3,\;
4,4,\;
5,5,5,\;
6,6,\;
7,7,\;
8,8,8,8
\} \\
=
&\;
N(\mathbf n')\sqcup N(\mathbf m').
\end{aligned}
\]
Again, the pair \((\mathbf n,\mathbf m)\) is not related to
\((\mathbf n',\mathbf m')\) by interchanging the two tuples or by
reversing the individual tuples.
\end{enumerate}

These examples show that equality of the principal
\(\mathrm{SL}_2(\mathbb C)\)-exponent multisets does not, by itself,
determine the pair of highest weights up to the natural symmetries.
Thus, in the diagonal setting, the principal \(\mathrm{SL}_2(\mathbb C)\)-restriction
should be regarded only as a necessary invariant. A proof of diagonal
uniqueness, where it holds, must use finer information from the full
restricted character.


\begin{thebibliography}{Bou75}


\bibitem[Bo05]{Bo} N. Bourbaki, {\it Lie groups and Lie algebras. Chapters 7-9.}
Translated from the 1975 and 1982 French originals by Andrew Pressley. Elements of Mathematics (Berlin). Springer-Verlag, Berlin, 2005.
 
  \bibitem[Bo 81]{bourbaki_lie_4_6}
N.~Bourbaki.
\newblock {\em \'{E}l\'{e}ments de math\'{e}matique}.
\newblock Masson, Paris, 1981.
\newblock Groupes et alg\`ebres de Lie. Chapitres 4, 5 et 6. [Lie groups and
  Lie algebras. Chapters 4, 5 and 6].

  
\bibitem[Gro00]{gross_miniscule}
Benedict~H. Gross, \emph{On minuscule representations and the principal {${\rm
  SL}_2$}}, Represent. Theory \textbf{4} (2000), 225--244. \MR{1795753}

  \bibitem[GL16]{guilhot_lecouvey}
J. Guilhot and C. Lecouvey, \emph{Isomorphic induced modules and Dynkin
diagram automorphisms of semisimple Lie algebras}, Glasg. Math. J.
\textbf{58} (2016), no.~1, 187-203.

\bibitem[Kos76]{kostant_eta}
B. Kostant, \emph{On {M}acdonald's {$\eta $}-function formula, the
  {L}aplacian and generalized exponents}, Advances in Math. \textbf{20} (1976),
  no.~2, 179--212. \MR{485661}

  \bibitem[Kos59]{kostant_principal}
B. Kostant, \emph{The principal three-dimensional subgroup and the
Betti numbers of a complex simple Lie group}, Amer. J. Math.
\textbf{81} (1959), no.~4, 973-1032.


\bibitem[NP22]{unique_branch}
Santosh Nadimpalli and Santosha Pattanayak, \emph{On uniqueness of branching to
  fixed point {L}ie subalgebras}, Forum Math. \textbf{34} (2022), no.~6,
  1663--1678. \MR{4504111}

\bibitem[NPP25]{character}
Santosh Nadimpalli, Santosha Pattanayak and Dipendra Prasad, \emph{Character theory at a torsion element},  	arXiv:2504.14684 (2025).

\bibitem[Pra16]{dipendra_half_sum}
Dipendra Prasad, \emph{Half the sum of positive roots, the {C}oxeter element,
  and a theorem of {K}ostant}, Forum Math. \textbf{28} (2016), no.~1, 193--199.
  \MR{3441113}

\bibitem[Raj04]{rajan_1}
C.~S. Rajan, \emph{Unique decomposition of tensor products of irreducible
  representations of simple algebraic groups}, Ann. of Math. (2) \textbf{160}
  (2004), no.~2, 683--704. \MR{2123935}
\end{thebibliography}
\end{document}